\newtheorem{same}{This should never appear}[section]
\newtheorem{defin}[same]{Definition}
\newtheorem{remark}[same]{Remark}
\newtheorem{theorem}[same]{Theorem}
\newtheorem{lemma}[same]{Lemma}
\newtheorem{fact}[same]{Fact}
\newtheorem{cor}[same]{Corollary}
\newtheorem{hypothesis}[same]{Hypothesis}
\newtheorem{nota}[same]{Notation}
\newtheorem{defin*}{Definition}
\newtheorem*{theorem*}{Theorem}
\newcommand{\skipitems}[1]{%
  \addtocounter{\@enumctr}{#1}%
}
\newcommand{\bb}{\mathbf{b}}
\newcommand{\id}{\textrm{id}}
\newcommand{\eff}{\mathcal{F}}
\newcommand{\K}{\mathbf{K}}
\newcommand{\Kf}{\K^{\eff}}
\newcommand{\LS}{\operatorname{LS}}
\newcommand{\leap}[1]{\le_{#1}}
\newcommand{\lea}{\leap{\K}}
\newcommand{\gtp}{\mathbf{gtp}}
\newcommand{\gS}{\mathbf{gS}}
\DeclareMathOperator{\pp}{pp}    
\DeclareMathOperator{\cof}{cf}    
\title{On superstability in the class of flat modules and perfect rings}
\date{\today.} 
\author{Marcos Mazari-Armida}
\email{mmazaria@andrew.cmu.edu}
\urladdr{http://www.math.cmu.edu/~mmazaria/ }
\address{Department of Mathematical Sciences \\ Carnegie Mellon
University \\ Pittsburgh, Pennsylvania, USA}
\begin{document}

\begin{abstract}

We obtain a characterization of  left perfect rings via superstability of the class of flat left modules with pure embeddings.

\begin{theorem} For a ring $R$ the following are equivalent.
\begin{enumerate}
\item $R$ is left perfect.
\item The class of flat left $R$-modules with pure embeddings is superstable.
\item There exists a $\lambda \geq (|R| + \aleph_0)^+$ such that the class of flat left $R$-modules with pure embeddings has uniqueness of limit models of cardinality $\lambda$.
\item Every limit model in the class of flat left $R$-modules with pure embeddings is $\Sigma$-cotorsion.
\end{enumerate}
\end{theorem}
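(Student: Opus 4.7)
The plan is to prove the cyclic chain of implications $(1) \Rightarrow (2) \Rightarrow (3) \Rightarrow (4) \Rightarrow (1)$, using Bass's classical characterizations of left perfect rings to bridge the ring-theoretic and module-theoretic content, and the general theory of superstable AECs and their limit models for the structural implications.

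For $(1) \Rightarrow (2)$, I would start from Bass's theorem: over a left perfect ring every flat left module is projective, and hence decomposes as a direct sum of countably generated (flat) modules. This strong decomposition gives tight control over Galois types in the AEC of flat modules with pure embeddings, and I would use it to verify superstability by one of its standard AEC-theoretic equivalents, such as unions of $\mu$-saturated chains being $\mu$-saturated, or the non-existence of long splitting chains. For $(2) \Rightarrow (3)$ I would cite the general AEC fact, developed through the Grossberg--VanDieren--Vasey program, that in AECs with amalgamation, joint embedding, and no maximal models---all of which are available for flat modules with pure embeddings---superstability entails uniqueness of limit models in a tail of cardinals. For $(3) \Rightarrow (4)$, uniqueness of limit models of cardinality $\lambda$ forces the $(\lambda,\omega)$-limit to be isomorphic to every $(\lambda,\theta)$-limit for regular $\theta$, which produces enough saturation with respect to pure embeddings that the limit model becomes pure-injective in a strong sense; combined with flatness this upgrades to the $\Sigma$-cotorsion property.

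The main obstacle is $(4) \Rightarrow (1)$, and my plan goes through the classical characterization (Bass, revisited by Guil Asensio--Herzog) that $R$ is left perfect if and only if every flat left $R$-module is cotorsion, equivalently $\Sigma$-cotorsion. Given an arbitrary flat left module $F$, I would purely embed it into a limit model $N$ of sufficiently large cardinality: this is possible because in the AEC of flat modules with pure embeddings, every module extends to a universal one, and iterating this construction starting above $|F|$ produces a limit model containing $F$ as a pure submodule. By hypothesis $N$ is $\Sigma$-cotorsion, and since $\Sigma$-cotorsion descends to pure submodules, $F$ is cotorsion. Hence every flat module is cotorsion and $R$ is left perfect. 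The delicate point will be verifying that the hypothesis about limit models of cardinality $\lambda$ (for the specific $\lambda$ of item (3), promoted through (4)) is strong enough to house the pure embedding of an arbitrary flat $F$, possibly via a passage through direct sums of copies of $F$, and confirming that the exact form of $\Sigma$-cotorsion used is preserved under the purity operations invoked.
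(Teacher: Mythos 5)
Your cycle has the right overall shape, and your $(4)\Rightarrow(1)$ is essentially the paper's argument (embed an arbitrary flat module purely into a large limit model, use that $\Sigma$-cotorsion passes to pure submodules, and invoke Xu's characterization of left perfect rings via ``every flat module is cotorsion''). But two of the implications have genuine gaps.

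The most serious problem is in $(3)\Rightarrow(4)$. You assert that uniqueness of limit models produces ``enough saturation with respect to pure embeddings that the limit model becomes pure-injective in a strong sense.'' For the class of flat modules with pure embeddings this is false in general: the paper proves only that limit models of cofinality $\geq (|R|+\aleph_0)^+$ are \emph{cotorsion} (via the cotorsion envelope and the Guil Asensio--Herzog criterion that cotorsion is equivalent to solvability of finitely solvable divisible systems of size $\leq |R|+\aleph_0$), and it shows separately (Theorem \ref{newroth}) that pure-injectivity of these limit models is \emph{equivalent} to the extra hypothesis that pure-injective envelopes of flat modules are flat --- a condition that characterizes Rothmaler's rings and does not hold in general. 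Moreover, even granting cotorsion, you give no mechanism for the upgrade to $\Sigma$-cotorsion. The paper's actual use of the uniqueness hypothesis is exactly here: one shows by induction that $N^{(\gamma)}$ is a $(\lambda,\gamma)$-limit model for $\gamma \leq |R|+\aleph_0$ (using that $M \oplus U$ is universal over a cotorsion $M$ when $U$ is universal), so uniqueness at $\lambda$ forces $N^{(\gamma)} \cong N$, whence $N^{(|R|+\aleph_0)}$ is cotorsion and the \v{S}aroch--\v{S}t'ov\'{\i}\v{c}ek criterion yields $\Sigma$-cotorsion; elementary equivalence of limit models then transfers the conclusion to limit models of every cardinality and cofinality. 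None of this is recoverable from a generic ``saturation implies pure-injectivity'' principle.

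In $(1)\Rightarrow(2)$ your route through projectivity and the Kaplansky-style decomposition into countably generated summands, followed by an appeal to ``no long splitting chains'' or saturated unions, is underspecified and misses the mechanism that actually makes the proof work: over a left perfect ring every flat module is cotorsion (Xu), cotorsion modules satisfy a Schr\"{o}der--Bernstein property for mutual pure embeddings, and since any two $\lambda$-limit models purely embed into each other (being universal), they are isomorphic. One must also still prove stability in every cardinal of a tail to guarantee that limit models \emph{exist} there, which the paper does by an explicit construction of universal models and direct sums; your sketch does not address existence at all. Finally, note that the paper takes uniqueness of limit models on a tail as the \emph{definition} of superstability, so $(2)\Rightarrow(3)$ is immediate rather than requiring the Grossberg--VanDieren--Vasey machinery, though citing it is harmless.
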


A key step in our argument is the study of limit models in the class of flat modules. We show that  limit models with chains of long cofinality are cotorsion and that limit models are elementarily equivalent. 

We obtain a new characterization via limit models of the rings characterized in \cite{roth}. We show that in these rings the equivalence between  left perfect rings and superstability can be refined. We show that the results for these rings can be applied to extend \cite[1.2]{sh820} to classes of flat modules not axiomatizable in first-order logic.

\end{abstract}


\maketitle

{\let\thefootnote\relax\footnote{{AMS 2010 Subject Classification:
Primary: 03C48, 16B70. Secondary: 03C45, 03C60, 13L05, 16L30, 16D10
Key words and phrases. Superstability; Perfect rings; Limit models; Cotorsion modules; Flat modules; Abstract Elementary
Classes.}}}

\tableofcontents

\section{Introduction}

An \emph{abstract elementary class} (AEC for short) is a pair $\K=(K, \lea)$, where $K$ is class of structures  and $\lea$ is a partial order on $K$ extending the substructure relation. AECs are closed under directed limits and every subset of a model in the class is contained in a small model in the class. Shelah introduced them in \cite{sh88} to capture the semantic structure of non-first-order theories.  Some interesting algebraic examples are: abelian groups with embeddings, torsion-free groups with pure embeddings, $R$-modules with embeddings, $R$-modules with pure embeddings and first-order axiomatizable classes of modules with pure embeddings. In this paper, we focus on the class of flat modules with pure embeddings. This is an AEC because flat modules are closed under pure submodules and directed limits. This class was already considered in \cite[\S 6]{lrv}.

\emph{Superstable theories} were introduced by Shelah in \cite{sh1} as part of his project to find dividing lines on the class of  complete first-order theories. This project is still central in current research in Model Theory. For AECs, Shelah introduced superstability in \cite{sh394}. Until recently it was believed to suffer from ``schizophrenia" \cite[p. 19]{shelahaecbook}, since it was not known if many natural conditions that were believed to characterize superstability were equivalent.  In \cite[1.3]{grva} and \cite{vaseyt}, it was shown (under extra hypotheses that are satisfied by the class of flat modules\footnote{The hypotheses are amalgamation, joint embedding, no maximal models and tameness.})  that superstability is a well-behaved concept as they showed that many possible characterizations of superstability are equivalent. Due to this and the important role that limit models play in this paper, we say that an AEC is \emph{superstable} if it has uniqueness of limit models on a tail of cardinals.\footnote{For a complete first-order $T$, $(Mod(T), \preceq)$ is superstable if and only if $T$ is superstable as a first-order theory, i.e., $T$ is $\lambda$-stable for every $\lambda \geq 2^{|T|}$.} Recall that a \emph{limit model} is a universal model with some level of homogeneity (see Definition \ref{limit}).  

 A ring $R$ is \emph{left perfect} if every flat left $R$-module is a projective module. Left perfect rings were introduced by Bass in \cite{bass}. They play a significant role in homological algebra (see \cite[\S 8]{lam}). Xu was the first to notice a relation between perfect rings and cotorsion modules in \cite[3.3.1]{xu}.

In this paper, we provide further evidence that the concept of superstability has algebraic significance. In the context of AECs this was first noticed in \cite{maz1}. Prior to it, there were a few papers \cite{shlaz}, \cite{balmc} and \cite{grsh} where notherian rings, artinian rings and superstability were related.

More precisely, we characterize left perfect rings via superstability of the class of flat left modules with pure embeddings. The main theorem of the paper is the following.

\textbf{Theorem \ref{main3}.} \textit{ For a ring $R$ the following are equivalent.
\begin{enumerate}
\item $R$ is left perfect.
\item The class of flat left $R$-modules with pure embeddings is superstable.
\item There exists a $\lambda \geq (|R| + \aleph_0)^+$ such that the class of flat left $R$-modules with pure embeddings has uniqueness of limit models of cardinality $\lambda$.
\item Every limit model in the class of flat left $R$-modules with pure embeddings is $\Sigma$-cotorsion.
\end{enumerate}}

In order to obtain the above equivalence, we study the limit models in the class of flat modules. We show that limit models with chains of long cofinality are cotorsion (Theorem \ref{bigpi2}), show that limit models are elementarily equivalent (Lemma \ref{elem}) and characterize limit models of countable cofinality (Lemma \ref{ccountablelim}).

Merging the main theorem of this paper together with the characterization of noetherian rings via superstability obtained in \cite[3.12]{maz1}; we obtain a characterization of artinian rings via superstability (Corollary \ref{art}).


In contrast to previous results on limit models with chains of long cofinality ( \cite[4.10]{maz}, \cite[4.5]{kuma} ), limit models with chains of long cofinalities in this case might not be pure-injective. This happens precisely because the class of flat modules is not necessarily closed under pure-injective envelopes. We obtain the following.

\textbf{Theorem \ref{newroth}.} \textit{  For a ring $R$ the following are equivalent.
\begin{enumerate}
\item Every $(\lambda, \alpha)$-limit model in the class of flat modules (with pure embeddings) with $\lambda \geq  (|R| + \aleph_0)^+$ and $\cof(\alpha)\geq (|R| + \aleph_0)^+$ is  pure-injective.
\item The pure-injective envelope of every flat left $R$-module is flat.
\end{enumerate}}

Since the rings characterized in \cite{roth} are those that satisfy the second condition of the above theorem,  the result gives a new characterization of such rings. In these rings we characterize the Galois-types and the stability cardinals of the class of flat modules with pure embeddings. As a simple corollary we obtain a result of Shelah regarding universal torsion-free abelian groups with respect to pure embeddings \cite[1.2]{sh820} (see Lemma \ref{g-st} and the remark below it). Moreover, by using that flat cotorsion modules are the same as pure-injective modules in this special case, we are able to  lower the bound in Theorem \ref{main3} where the tail of cardinals where uniqueness of limit models begins to $|R|+ \aleph_0$ (Theorem \ref{main1}). 

The class of flat modules is not first-order order axiomatizable (\cite[Theo. 4]{eksa}), but it is axiomatizable in $\mathbb{L}_{\infty, \omega}$ (\cite[\S 2]{hero}). Due to this, the results of this paper lie outside of the scope of first-order model theory and hint to the importance of the development of non-first-order methods.

The paper is divided into four sections. Section 2 presents necessary background. Section 3  studies limit models in the class of flat modules with pure embeddings and provides a new characterization of left perfect rings via superstability of the class of flat modules. Section 4 studies limit models in the class of flat modules under an additional assumption, characterizes this assumption via limit models and provides a refinement of the main theorem under the additional hypothesis.

This paper was written while the author was working on a Ph.D. under the direction of Rami Grossberg at Carnegie Mellon University and I would like to thank Professor Grossberg for his guidance and assistance in my research in general and in this work in particular. I would like to thank Thomas G. Kucera for sharing his module theoretic knowledge.  I would like to thank John T. Baldwin, Philipp Rothmaler and the referee for many valuable comments that significantly improved the paper.


\section{Preliminaries}

We present the basic concepts of abstract elementary classes that are used in this paper. These are further studied in \cite[\S 4 - 8]{baldwinbook09} and  \cite[\S 2, \S 4.4]{ramibook}.  Regarding the background on module theory, we give a brief survey of the concepts we will use in this paper and present a few concepts throughout the text. The main module theoretic ideas used in this paper are studied in detail in \cite{xu}.

\subsection{Abstract elementary classes}
Abstract elementary classes (AECs) were introduced by Shelah in
\cite[1.2]{sh88}. Among the requirements we have that an AEC is closed under directed limits and that every set is contained in a small model in the class. The reader can consult the definition in \cite[4.1]{baldwinbook09}.  
\begin{nota}\
\begin{itemize}
\item Given a model $M$, we will write $|M|$ for its underlying set and $\| M \|$ for its cardinality. 

\item If $\lambda$ is a cardinal and $\K$ is an AEC, then $\K_{\lambda}=\{ M \in \K : \| M \|=\lambda \}$.

\item Let $M, N \in \K$. If we write ``$f: M \to N$", we assume that
$f$ is a $\K$-embedding, i.e., $f: M \cong f[M]$ and $f[M] \lea N$.
In particular, $\K$-embeddings are always monomorphisms.
\end{itemize}
\end{nota}

In \cite{sh300} Shelah introduced a notion of semantic type. The
original definition was refined and extended by many authors who
following \cite{grossberg2002} call
these semantic types Galois-types (Shelah recently named them orbital
types).
We present here the modern definition and call them Galois-types
throughout the text. We follow the notation of \cite[2.5]{mv}.

\begin{defin}\label{gtp-def}
  Let $\K$ be an AEC.
  
  \begin{enumerate}
    \item Let $\K^3$ be the set of triples of the form $(\bb, A,
N)$, where $N \in \K$, $A \subseteq |N|$, and $\bb$ is a sequence
of elements from $N$. 
    \item For $(\bb_1, A_1, N_1), (\bb_2, A_2, N_2) \in \K^3$, we
say $(\bb_1, A_1, N_1)E_{\text{at}}^{\K} (\bb_2, A_2, N_2)$ if $A
:= A_1 = A_2$, and there exist $\K$-embeddings $f_\ell : N_\ell \to_A N$ for $\ell \in \{ 1, 2\}$ such that
$f_1 (\bb_1) = f_2 (\bb_2)$ and $N \in \K$.
    \item Note that $E_{\text{at}}^{\K}$ is a symmetric and
reflexive relation on $\K^3$. We let $E^{\K}$ be the transitive
closure of $E_{\text{at}}^{\K}$.
    \item For $(\bb, A, N) \in \K^3$, let $\gtp_{\K} (\bb / A;
N) := [(\bb, A, N)]_{E^{\K}}$. We call such an equivalence class a
\emph{Galois-type}. Usually, $\K$ will be clear from the context and we will omit it.
\item For $M \in \K$, $\gS_{\K}(M)= \{  \gtp_{\K}(b / M; N) : M
\leq_{\K} N\in \K \text{ and } b \in N\} $ 
\item For $\gtp_{\K} (\bb / A; N)$ and $C \subseteq A$, $\gtp_{\K} (\bb / A; N)\upharpoonright_{C}:= [(\bb, C, N)]_E$.

  \end{enumerate}

\end{defin}

\begin{defin} An AEC $\K$ is \emph{$\lambda$-stable}  if for any $M \in
\K_\lambda$, $| \gS_{\K}(M) | \leq \lambda$. 
\end{defin}

Recall the following notion that was isolated by
Grossberg and VanDieren in \cite{tamenessone}.

\begin{defin} 
$\K$ is \emph{$(< \kappa)$-tame} if for any $M \in \K$ and $p \neq q \in \gS(M)$,  there is $A \subseteq |M|$ such that $|A |< \kappa$ and $p\upharpoonright_{A} \neq q\upharpoonright_{A}$.
\end{defin}

Before introducing the concept of limit model we recall the concept of universal extension.

\begin{defin}
$M$ is $\lambda$-\emph{universal over} $N$ if and only if $N \lea M$
 and for any $N^* \in \K_{\leq\lambda}$ such that
$N \lea N^*$, there is $f: N^* \xrightarrow[N]{} M$. $M$ is \emph{universal
over} $N$ if and only if $\| N\|= \| M\| $ and $M$ is $\| M
\|$-\emph{universal over} $N$. 
\end{defin}

With this we are ready to introduce limit models, they were originally introduced in \cite{kosh}.

\begin{defin}\label{limit}
Let $\lambda$ be an infinite cardinal and $\alpha < \lambda^+$ be a limit ordinal.  $M$ is a \emph{$(\lambda,
\alpha)$-limit model over} $N$ if and only if there is $\{ M_i : i <
\alpha\}\subseteq \K_\lambda$ an increasing continuous chain such
that $M_0 :=N$, $M_{i+1}$ is universal over $M_i$ for each $i <
\alpha$ and $M= \bigcup_{i < \alpha} M_i$.

$M$ is a $(\lambda, \alpha)$-limit model if there is $N \in
\K_\lambda$ such that $M$ is a $(\lambda, \alpha)$-limit model over
$N$. $M$ is a $\lambda$-limit model if there is a limit ordinal
$\alpha < \lambda^+$ such that $M$  is a $(\lambda,
\alpha)$-limit model. We say that $M$ is a limit model if there is an infinite cardinal $\lambda$ such that $M$ is a $\lambda$-limit model.

\end{defin}
\begin{flushright}
•
\end{flushright}

Observe that if $M$ is a $\lambda$-limit model, then $M$ has cardinality $\lambda$. The next fact gives conditions for the existence of limit models.

\begin{fact}[{\cite[\S II]{shelahaecbook}, \cite[2.9]{tamenessone}}]\label{existence}
Let $\K$ be an AEC with joint embedding, amalgamation and no maximal models. If $\K$ is $\lambda$-stable, then for every $N \in \K_\lambda$ and limit ordinal $\alpha < \lambda^+$ there is $M$ a $(\lambda, \alpha)$-limit model over $N$. Conversely, if $\K$ has a $\lambda$-limit model, then $\K$ is $\lambda$-stable
\end{fact}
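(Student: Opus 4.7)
The plan is to prove the two directions of the equivalence separately. For the forward direction the key intermediate step is to show that under $\lambda$-stability together with amalgamation and no maximal models, every $M \in \K_\lambda$ admits a \emph{universal} extension $M' \in \K_\lambda$. Granted this, I build the $(\lambda,\alpha)$-limit model over $N$ by transfinite recursion of length $\alpha$: set $M_0 = N$, pick $M_{i+1}$ to be a universal extension of $M_i$ in $\K_\lambda$ at successor stages, and take the directed colimit at limit stages (which lies in $\K$ by the AEC axioms). Since $\alpha < \lambda^+$, the union $\bigcup_{i<\alpha} M_i$ still has cardinality $\lambda$, so it is a $(\lambda,\alpha)$-limit model over $N$.

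To produce the universal extension I would first use $|\gS(M)| \leq \lambda$ to realize every Galois-type over $M$ inside a single extension $M^1 \in \K_\lambda$, gluing realizations together using amalgamation and invoking no maximal models to keep the construction going. Iterating $\omega$ times produces $M^\omega := \bigcup_{n<\omega} M^n \in \K_\lambda$ in which every Galois-type over every $M^n$ is realized. A back-and-forth argument --- enumerating a given $N^* \in \K_{\leq\lambda}$ extending $M$ as $\{n_\xi : \xi < \lambda\}$ and embedding the elements one by one, observing that the relevant Galois-type over $M$ together with the previously embedded initial segment lives over some $M^n$ and is therefore realized at stage $n+1$ --- then shows that $M^\omega$ is universal over $M$. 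This back-and-forth through an $\omega$-chain, rather than a single ``realize all types over $M$'' step, is the main technical obstacle, since one must handle Galois-types over sets that grow with the enumeration and not merely types over the fixed base.

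For the converse, suppose $M \in \K_\lambda$ is a $(\lambda,\alpha)$-limit witnessed by a chain $\langle M_i : i < \alpha \rangle$. Any Galois-type in $\gS(M_i)$ is realized in some extension of $M_i$ of cardinality $\leq \lambda$ (by the L\"owenheim-Skolem axiom of an AEC), and since $M_{i+1}$ is universal over $M_i$ that extension embeds into $M_{i+1}$ over $M_i$, so every type over $M_i$ is realized in $M_{i+1}$. Hence $|\gS(M_i)| \leq \|M_{i+1}\| = \lambda$. To upgrade this to $\lambda$-stability over an arbitrary $N' \in \K_\lambda$, I use joint embedding together with amalgamation and the universality of $M_1$ over $M_0$ to embed $N'$ into some $M_j$ over a common base, and then transport the bound $|\gS(M_j)| \leq \lambda$ back along this embedding using invariance of Galois-types under $\K$-embeddings.
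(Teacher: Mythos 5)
The paper does not prove this statement---it is quoted as a known Fact from \cite[\S II]{shelahaecbook} and \cite[2.9]{tamenessone}---so your proposal has to be measured against the standard argument in those sources. Your converse direction is essentially that argument and is correct: each $p\in\gS(M_i)$ is realized in $M_{i+1}$ (realize $p$ in a $\lambda$-sized extension by L\"{o}wenheim--Skolem and push it into $M_{i+1}$ by universality), distinct types have distinct realizations, so $|\gS(M_i)|\leq\lambda$; an arbitrary $N'\in\K_\lambda$ embeds into $M_1$ by joint embedding plus universality of $M_1$ over $M_0$; and the bound transports back because the restriction map $\gS(M_1)\to\gS(f[N'])$ is surjective by amalgamation. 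The reduction of the forward direction to the existence of universal extensions, followed by a recursion of length $\alpha$, is also the standard route.

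The gap is in your claim that $M^\omega=\bigcup_{n<\omega}M^n$ is universal over $M$. For uncountable $\lambda$ the enumeration $\{n_\xi:\xi<\lambda\}$ of $N^*$ has length $\lambda$, and at any stage $\xi\geq\omega$ the set of previously embedded elements need not lie in any single $M^n$: the realization chosen at stage $\eta$ lives in some $M^{k_\eta+1}$ with $k_\eta$ unbounded in $\omega$, so already after $\omega$ steps the image can be cofinal in the chain $\langle M^n : n<\omega\rangle$ and there is no model left above it over which to realize the next type. (A secondary issue is that realizing every Galois type over the \emph{model} $M^n$ does not by itself realize types over $M$ together with an arbitrary subset of $M^n$; one must first extend to a complete type over $M^n$ via amalgamation.) The standard repair is to build a continuous chain $\langle M_i : i<\lambda\rangle$ of length $\lambda$ with $M_{i+1}$ realizing all of $\gS(M_i)$, and to prove universality of the union by the pullback trick: construct an increasing continuous chain $\langle N_i : i<\lambda\rangle$ with $N_0=N^*$ and embeddings $g_i:M_i\to N_i$ such that $n_i\in g_{i+1}[M_{i+1}]$, by pulling $\gtp(n_i/g_i[M_i];N_i)$ back along $g_i$ to a type over the model $M_i$ and realizing it in $M_{i+1}$; then the inverse of $\bigcup_i g_i$, restricted to $N^*$, is the desired embedding of $N^*$ into $\bigcup_i M_i$ over $M$. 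This way one only ever realizes types over the models $M_i$, and the length-$\lambda$ bookkeeping guarantees that at each stage the part of $N^*$ already handled sits inside the image of some $M_i$ with room above. Your $\omega$-iteration, as described, establishes universality only when $\lambda=\aleph_0$.
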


The key question regarding limit models is the uniqueness of $\lambda$-limit models for a fixed cardinal $\lambda$. When the lengths of the cofinalities of the chains of the limit models are equal, one can show that the limit models are isomorphic by a back-and-forth argument.\footnote{Hence, for a fixed cardinal $\lambda$ and a fixed limit ordinal $\alpha < \lambda^+$, there is a unique $(\lambda, \alpha)$-limit model.} Therefore, the question is what happens when the cofinalities of the chains of the limit models are different. This has been studied thoroughly in the context of abstract elementary classes \cite{shvi}, \cite{van06}, \cite{grvavi}, \cite{extendingframes}, \cite{vand}, \cite{bovan} and \cite{vasey18}.

\begin{defin}
$\K$ has \emph{uniqueness of limit models of cardinality $\lambda$} if $\K$ has $\lambda$-limit models and if given $M, N$ $\lambda$-limit models, $M$ and $N$ are isomorphic.
\end{defin}

In \cite[1.3]{grva} and \cite{vaseyt} it was shown that for AECs that have amalgamation, joint embedding, no maximal models and are tame, the definition below is equivalent to every other definition of superstability considered in the context of AECs.  Since the class of flat modules with pure embeddings satisfies these properties  (see Fact \ref{ffact}), we introduce the following as the definition of superstability.

\begin{defin}
$\K$ is a \emph{superstable} AEC if and only if $\K$ has uniqueness of limit models on a tail of cardinals.
\end{defin}

\begin{remark} For a complete first-order $T$, $(Mod(T), \preceq)$ is superstable if and only if $T$ is superstable as a first-order theory, i.e., $T$ is $\lambda$-stable for every $\lambda \geq 2^{|T|}$. The forward direction follows from Fact \ref{existence} and the backward direction from \cite[1.6]{grvavi}.
\end{remark}

Finally, recall the standard notion of a universal model.

\begin{defin}
Let $\K$ be an AEC and $\lambda$ be a cardinal. $M \in \K$ is a \emph{universal model in
$\K_\lambda$} if $M \in \K_\lambda$ and if given any $N \in \K_\lambda$, there is $f: N \to M$.
\end{defin}

The following fact will be useful.

\begin{fact}[{\cite[2.10]{maz}}]\label{euni}
Let $\K$ be an AEC with the joint embedding property. If $M$ is a $\lambda$-limit model, then $M$ is a universal model in $\K_\lambda$.
\end{fact}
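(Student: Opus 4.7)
The plan is to exploit the universality built into the limit-model chain: already $M_1$, the first universal step above $M_0$, is enough to absorb any $N \in \K_\lambda$, provided joint embedding is first used to route $N$ above $M_0$. So the proof should be a short definition-chase.

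Fix a witness chain $\langle M_i : i < \alpha \rangle$ for $M$, so that $M_0 \in \K_\lambda$ and $M_1$ is universal over $M_0$ with $M_1 \lea M$. Given an arbitrary $N \in \K_\lambda$, joint embedding supplies some $P \in \K$ together with $\K$-embeddings $g : M_0 \to P$ and $h : N \to P$. Renaming the universe of $P$ along $g^{-1}$, I may assume $g$ is the inclusion, so that $M_0 \lea P$ while $h : N \to P$ is still a $\K$-embedding.

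Next I would cut $P$ down to cardinality $\lambda$: applying the L\"owenheim--Skolem axiom of an AEC to the subset $|M_0| \cup h[|N|] \subseteq |P|$, which has size $\lambda$ (and $\LS(\K) \leq \lambda$ because $\K_\lambda$ is nonempty, witnessed by $M_0$), I extract $P' \lea P$ with $M_0 \lea P'$, $h[|N|] \subseteq |P'|$, and $\|P'\| = \lambda$. Since $M_1$ is universal over $M_0$ and $P' \in \K_{\leq \lambda}$ is a $\K$-extension of $M_0$, the definition of universal extension yields a $\K$-embedding $f : P' \to M_1$ fixing $M_0$ pointwise. Composing, $f \circ h : N \to M_1 \lea M$ is the desired $\K$-embedding of $N$ into $M$.

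The argument is straightforward, so there is no serious obstacle; the only delicate bookkeeping step is the use of L\"owenheim--Skolem to arrange that the joint-embedding amalgam lands in $\K_{\leq\lambda}$, since the universality of $M_1$ over $M_0$ only applies to $\K$-extensions of that size. Everything else is a direct unpacking of the definitions of limit model, joint embedding, and universal extension.
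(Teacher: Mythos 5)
Your argument is correct and is essentially the standard proof of this fact (the paper itself only cites it from \cite[2.10]{maz} without giving a proof): route $N$ over $M_0$ via joint embedding, cut the amalgam down with L\"owenheim--Skolem, and absorb the result into $M_1$ using universality over $M_0$. The one slip is the parenthetical claim that $\LS(\K) \leq \lambda$ follows from $\K_\lambda \neq \emptyset$ --- that implication is false in general (an AEC can have models below its L\"owenheim--Skolem number); rather, $\lambda \geq \LS(\K)$ is the implicit standing assumption whenever $\lambda$-limit models are considered (and it holds in every application in this paper, where $\lambda \geq |R| + \aleph_0$), and with it your L\"owenheim--Skolem step goes through, with the coherence axiom supplying the unstated facts that $M_0 \lea P'$ and that $h$ remains a $\K$-embedding into $P'$.
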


\subsection{Module Theory}  All rings considered in this paper are associative with an identity element. Recall that a left $R$-module $F$ is \emph{flat} if $(-) \otimes F$ is an exact functor. $M$ is a \emph{pure submodule} of $N$, denoted by $\leq_{pp}$, if for every $L$ right $R$-module $L \otimes M \to L \otimes N$ is a monomorphism.

\begin{nota} Given a ring $R$, let $\K^{\eff}=( K_{flat}, \leq_{pp})$ where $K_{flat}$ is the class of flat left $R$-modules  and $\leq_{pp}$ denotes the pure submodule relation. \end{nota}

We assume the reader is familiar with pure-injective modules (see for example \cite[\S 2]{prest}) and focus on cotorsion  modules. Cotorsion modules were introduced by Harrison in \cite{har}.

\begin{defin}
A left $R$-module $M$ is \emph{cotorsion} if and only if $Ext^1(F, M)=0$ for every flat module $F$, or equivalently, every short exact sequence $0 \to M \to N \to F \to 0$  with $F$ a flat module splits.
\end{defin}

It is easy to check that a pure-injective module is cotorsion. The following generalization of Bumby's result \cite{bumby} will be useful.

\begin{fact}[{ \cite[3.2]{gks}}]\label{ipi} Let $M, N$ be cotorsion modules. If there are $f: M \to N$ a pure embedding and $g: N \to M$ a 
pure embedding, then $M$ and $N$ are isomorphic. 

\end{fact}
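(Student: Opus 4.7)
The plan is a Schr\"oder--Bernstein style argument in the spirit of Bumby's original theorem for injective modules, with the cotorsion hypothesis replacing injectivity and pure embeddings replacing arbitrary embeddings. The key splitting tool is that if $A$ is cotorsion and $A \leq_{pp} B$ with $B/A$ flat, then $\operatorname{Ext}^1(B/A, A) = 0$, so the short exact sequence $0 \to A \to B \to B/A \to 0$ splits and $B = A \oplus C$ with $C \cong B/A$ flat. In the setting of interest for this paper, where the cotorsion modules $M, N$ are flat and pure embeddings between flat modules have flat cokernels, this applies directly, yielding decompositions $N = f[M] \oplus N'$ and $M = g[N] \oplus M'$ with $N', M'$ flat direct summands.

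Substituting one identity into the other produces
\[
M = gf[M] \oplus g[N'] \oplus M' = h[M] \oplus P,
\]
where $h := gf$ is a pure self-embedding of $M$ and $P := g[N'] \oplus M'$. Since $h$ restricts to a bijection onto its image, iterating yields, for every $n \ge 1$,
\[
M = h^n[M] \oplus \bigoplus_{i < n} h^i[P].
\]
A fully symmetric computation on the $N$-side with $\tilde h := fg$ and $Q := f[M'] \oplus N'$ gives $N = \tilde h^n[N] \oplus \bigoplus_{i < n} \tilde h^i[Q]$. The obvious isomorphisms $h^i[P] \cong P \cong M' \oplus N' \cong Q \cong \tilde h^i[Q]$ reduce the problem to matching the residual pieces $M_\infty := \bigcap_n h^n[M]$ and $N_\infty := \bigcap_n \tilde h^n[N]$.

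For the residual pieces, the intertwinings $f \circ h = \tilde h \circ f$ and $g \circ \tilde h = h \circ g$ show that $f$ and $g$ restrict to maps between $M_\infty$ and $N_\infty$. Moreover $h$ is an automorphism of $M_\infty$ (any $x \in M_\infty$ is $h(y)$ for a unique $y$, which must again lie in every $h^n[M]$), and similarly for $\tilde h$; a short diagram chase then shows that $f \rest M_\infty$ and $g \rest N_\infty$ are mutually inverse isomorphisms. Assembling $M_\infty \cong N_\infty$ with the identification of the $h^i[P]$- and $\tilde h^i[Q]$-summands gives $M \cong N$.

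The main obstacle is the limit step: one must justify that $M_\infty$ is actually a direct summand of $M$ and that the decomposition $M = M_\infty \oplus \bigoplus_{n < \omega} h^n[P]$ holds, rather than only the finite truncations produced by the iteration. For injective modules this is Bumby's delicate point and relies on the chain-completeness of injective summands; for cotorsion modules the analogous closure under inverse limits of the summand system is precisely the homological input supplied by \cite{gks}. Once this decomposition is in hand the preceding isomorphisms combine to finish the proof.
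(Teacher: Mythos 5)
The paper does not actually prove this statement: it is imported as a black box, with the proof being the citation to \cite[3.2]{gks}. So there is no in-paper argument to compare yours against, and your attempt has to stand on its own. It does not, for two reasons.

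First, your opening splitting step only works for \emph{flat} cotorsion modules. For a general cotorsion $M$ and a pure embedding $M \leq_{pp} N$, the quotient $N/M$ need not be flat and the sequence need not split: take $M$ cotorsion but not pure-injective and $N = PE(M)$ its pure-injective envelope; then $N$ is cotorsion, $M \leq_{pp} N$ is pure, and $0 \to M \to N \to N/M \to 0$ does not split (otherwise $M$ would be pure-injective). So what you are proving is a strictly weaker statement than the Fact. That weaker statement does happen to suffice for the paper's only application (Corollary \ref{easy} concerns limit models in $\Kf$, which are flat, and there pure submodules do have flat quotients), but it is not the result as stated.

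Second, and more seriously, the step you yourself flag as ``the main obstacle'' is a genuine unfilled gap, and it is precisely where all the content lives. The finite decompositions $M = h^n[M] \oplus \bigoplus_{i<n} h^i[P]$ do \emph{not} yield $M = M_\infty \oplus \bigoplus_{n<\omega} h^n[P]$: the intersection of a descending chain of direct summands need not be a summand, and indeed the Schr\"oder--Bernstein property fails for modules in general even when both embeddings split (there are classical examples of modules $A, B$ with $A \cong B \oplus X$ and $B \cong A \oplus Y$ but $A \not\cong B$). So the cotorsion hypothesis must be used in an essential way at the limit stage, and writing that the needed closure ``is precisely the homological input supplied by \cite{gks}'' defers the key step to the very result you are supposed to be proving. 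As it stands the argument establishes only the finite truncations, which prove nothing; to accept the Fact one still has to fall back on the citation.
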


Similar to the notion of pure-injective envelope there is the notion of cotorsion envelope. These are thoroughly studied by Xu in  \cite[\S 1, 3.4]{xu}.

\begin{defin}
Let $M$ be a module, $M \hookrightarrow_i C(M)$ is the \emph{cotorsion envelope} of $M$ if and only if
\begin{enumerate}
\item If $\phi: M \to C$ and $C$ is a cotorsion module, then there is $f: C(M) \to C$ such that $\phi=f \circ i$.
\item If an endomorphism $f: C(M) \to C(M)$ is such that $i = f\circ i$, then $f$ is an automorphism.
\end{enumerate}
\end{defin}

The existence of a cotorsion envelope for every module is a deep result that is equivalent to \emph{the Flat Cover Conjecture}. The Flat Cover Conjecture was asserted by Enoch in \cite{enoch} and proved twenty years later by Bican,  El Bashir and  Enochs in \cite{bee}. We will use that there are cotorsion envelopes a few times in the text.

An easy assertion that we will use is the following.

\begin{fact}[{\cite[3.4.2]{xu}}]\label{xu1}
If $M \hookrightarrow_i C(M)$ is a cotorsion envelope, then $C(M)/M$ is flat and $M \leq_{pp} C(M)$. Moreover, if $M$ is flat, then $C(M)$ is flat. 
\end{fact}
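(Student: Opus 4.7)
The plan is to apply Wakamatsu's lemma to the cotorsion envelope $i\colon M \hookrightarrow C(M)$. The two conditions in the paper's definition are precisely those of being an injective, left-minimal preenvelope into the class of cotorsion modules, and cotorsion modules are closed under extensions (a one-line $\text{Ext}$ long exact sequence argument using the definition of cotorsion against a flat $F$). Wakamatsu's lemma then delivers $\text{Ext}^1(C(M)/M, N) = 0$ for every cotorsion $N$, i.e.\ $C(M)/M \in {}^\perp(\text{Cotorsion})$. Since $(\text{Flat}, \text{Cotorsion})$ is a complete cotorsion pair---the content of the Flat Cover Conjecture of Bican--El Bashir--Enochs---this orthogonal equals the class of flat modules, so $C(M)/M$ is flat.

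To make the Wakamatsu step concrete: given a cotorsion $N$ and an extension $0 \to N \to E \to C(M)/M \to 0$, I would pull back along the quotient $C(M) \twoheadrightarrow C(M)/M$ to obtain $0 \to N \to P \to C(M) \to 0$. Extension closure of cotorsion modules makes $P$ cotorsion, so the universal property of the envelope applied to the canonical map $M \to P$ sending $m \mapsto (0, i(m))$ produces a lift $f\colon C(M) \to P$. Composing $f$ with $P \twoheadrightarrow C(M)$ yields an endomorphism of $C(M)$ fixing $i$, hence an automorphism by left-minimality; correcting $f$ by this automorphism turns it into a section of $P \to C(M)$, and standard diagram chasing pushes it down to a section of $E \to C(M)/M$, so the extension splits.

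Once $C(M)/M$ is flat, $M \leq_{pp} C(M)$ is immediate: tensoring the sequence $0 \to M \to C(M) \to C(M)/M \to 0$ with any right $R$-module $L$ remains short exact because $\text{Tor}_1^R(L, C(M)/M) = 0$, so $L \otimes M \to L \otimes C(M)$ is injective. For the \emph{moreover} clause, the analogous Tor long exact sequence shows that flat modules are closed under extensions, so if $M$ is flat then the same sequence presents $C(M)$ as an extension of two flat modules and is therefore flat.

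The main obstacle, in terms of non-elementary input, is the cotorsion-pair identification ${}^\perp(\text{Cotorsion}) = \text{Flat}$, which depends on the Flat Cover Conjecture; the remaining steps are routine $\text{Ext}/\text{Tor}$ bookkeeping, and the only technical move is the pullback-plus-left-minimality manipulation inside Wakamatsu's argument.
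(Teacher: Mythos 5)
This statement is quoted from Xu's book as a Fact and the paper supplies no proof of its own, so there is nothing internal to compare against; your argument is correct and is essentially the standard proof one finds in \cite[3.4.2]{xu}: Wakamatsu's lemma applied to the extension-closed class of cotorsion modules, followed by the identification of ${}^{\perp}(\textrm{Cotorsion})$ with the flat modules, and then routine $\mathrm{Tor}$ arguments for purity and for the ``moreover'' clause. Two small remarks. First, your Wakamatsu step uses that $C(M)$ itself is cotorsion (so that the pullback $P$ is cotorsion); the paper's Definition of cotorsion envelope omits this clause, but it is of course part of the intended definition and you are right to assume it. Second, the identification ${}^{\perp}(\textrm{Cotorsion}) = \textrm{Flat}$ does not actually require the Flat Cover Conjecture: since character modules $N^{+}$ are pure-injective, hence cotorsion, and $\mathrm{Ext}^1_R(X, N^{+}) \cong \mathrm{Tor}_1^R(N, X)^{+}$, any $X$ with $\mathrm{Ext}^1(X, C) = 0$ for all cotorsion $C$ has vanishing $\mathrm{Tor}_1(N, X)$ for all $N$ and is therefore flat. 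This is more elementary than invoking completeness of the cotorsion pair, though in the present context nothing is lost by your route, as the very existence of cotorsion envelopes already presupposes the Bican--El Bashir--Enochs theorem.
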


We will also work with the following class of modules.

\begin{defin}
 A left $R$-module $M$ is \emph{$\Sigma$-cotorsion} if and only if $M^{(I)}$ is a cotorsion module for every index set $I$.
\end{defin}

 Left perfect rings were introduced by Bass in \cite{bass}. They play a significant role in homological algebra and have been  thoroughly studied, see for example \cite[\S 8]{lam}.

\begin{defin}
A ring $R$ is \emph{left perfect} if every flat left $R$-module is a projective module.
\end{defin}

Below we give some equivalent conditions that characterize left perfect rings. Further equivalent conditions that mention cotorsion modules are given in \cite{gh3}.

\begin{fact}[{\cite[3.3.1]{xu}}]\label{xu2} For a ring $R$ the following are equivalent.
\begin{enumerate}
\item $R$ is left perfect.
\item Every flat left $R$-modules is cotorsion.
\item Every left $R$-module is cotorsion.
\end{enumerate}

\end{fact}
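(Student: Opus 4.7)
My plan is to establish the three equivalences by the cycle $(1) \Rightarrow (3) \Rightarrow (2) \Rightarrow (1)$. The first two implications are essentially formal and take only a few lines, while all the content lies in $(2) \Rightarrow (1)$, which I would handle with a standard splitting argument using pure-exact sequences.

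For $(1) \Rightarrow (3)$, I would simply note that if every flat module is projective, then for any module $M$ and any flat $F$ we have $\mathrm{Ext}^1(F, M) = 0$ (since $F$ is projective), which says precisely that $M$ is cotorsion. The implication $(3) \Rightarrow (2)$ is immediate since (2) is a special case of (3).

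For $(2) \Rightarrow (1)$, let $F$ be a flat left $R$-module; I want to produce a splitting of some presentation of $F$ by a projective. First I would fix a short exact sequence $0 \to K \to P \to F \to 0$ with $P$ free, hence projective and flat. The key observation is that since $F$ is flat, $\mathrm{Tor}_1^R(N, F) = 0$ for every right $R$-module $N$, and the long Tor sequence then shows that tensoring with $N$ preserves exactness. Equivalently, the sequence is pure exact, so $K \leq_{pp} P$. Since pure submodules of flat modules are flat, $K$ is flat, and hypothesis (2) gives that $K$ is cotorsion. But cotorsion of $K$ together with flatness of $F$ forces $\mathrm{Ext}^1(F, K) = 0$, so the sequence splits and $F$ is a direct summand of $P$; thus $F$ is projective.

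The main obstacle is not really an obstacle once the right tools are in hand: one needs the Tor-criterion identifying flatness of $C$ with pure-exactness of every short exact sequence ending in $C$, plus the standard fact that pure submodules of flat modules are flat. With these in place, the argument is purely formal and, in particular, bypasses Bass's deeper ring-theoretic characterization of left perfect rings entirely.
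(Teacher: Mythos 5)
Your proof is correct. Note that the paper gives no proof of this statement at all --- it is quoted as Fact \ref{xu2} directly from \cite[3.3.1]{xu} --- so there is nothing internal to compare against; your argument is the standard one (and, given that the paper \emph{defines} left perfect as ``every flat module is projective,'' the cycle $(1)\Rightarrow(3)\Rightarrow(2)\Rightarrow(1)$ with the splitting of the pure-exact presentation $0 \to K \to P \to F \to 0$ is exactly the expected route). Each step checks out: the sequence is pure because $\mathrm{Tor}_1^R(N,F)=0$ for all right modules $N$, hence $K$ is flat, hence cotorsion by (2), hence $\mathrm{Ext}^1(F,K)=0$ and the sequence splits.
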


\section{The main case}

In this section, we will work with the class of flat modules with pure embeddings. We obtain a characterization of  left perfect rings via superstability of the class of flat left modules with pure embeddings (Theorem \ref{main3}). This is obtained by understanding the limit models of the class.

 The next result follows from \cite[\S 6]{lrv}.

\begin{fact}\label{ffact} Let $R$ be ring and $\K^{\eff}=( K_{flat}, \leq_{pp})$ where $K_{flat}$ is the class of flat left $R$-modules.
\begin{enumerate}
\item $\Kf$ is an AEC with $\LS(\Kf)= |R| + \aleph_0$.
\item $\Kf$ has joint embedding, amalgamation and no maximal models.
\item There is a cardinal $\theta_0\geq |R| + \aleph_0$ such that if $\lambda ^{\theta_0}=\lambda$, then $\Kf$ is $\lambda$-stable.
\item There is a cardinal $\theta_1 \geq |R| + \aleph_0$ such that $\Kf$ is $\theta_1$-tame for Galois-types of finite length.
\end{enumerate}
\end{fact}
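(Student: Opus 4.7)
The plan is to verify each clause in turn, mostly by specializing known facts about the AEC of all $R$-modules with pure embeddings to the subclass of flat modules.

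For clause (1), I would check the AEC axioms directly. Coherence of $\leq_{pp}$ and closure under isomorphism are routine, while the chain axioms rest on the classical facts that a directed union of flat modules is flat and that a directed union of pure embeddings is itself a pure embedding into the limit. For the L\"owenheim--Skolem number, given a flat $M$ and $A \subseteq |M|$ with $|A| \leq |R| + \aleph_0$, I would close $A$ under an $\omega$-step construction in which each stage adjoins, for every $pp$-formula $\phi(\bar x, \bar y)$ and every finite tuple $\bar a$ from the current set with $M \models \exists \bar y\, \phi(\bar a, \bar y)$, a chosen witness $\bar b$. Since the language has size $|R| + \aleph_0$, each stage adds at most $|R| + \aleph_0$ elements, and the resulting submodule $N$ is pure in $M$ by construction. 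Flatness of $N$ then follows: since $N \leq_{pp} M$ with $M$ flat, the sequence $0 \to N \to M \to M/N \to 0$ is pure-exact, whence $M/N$ is flat, and a short diagram chase with $\mathrm{Tor}$ yields $\mathrm{Tor}_1^R(-,N)=0$. This gives $\LS(\Kf) = |R| + \aleph_0$.

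For clause (2), joint embedding is immediate since $M \oplus N$ is flat and contains each summand purely. For amalgamation, given pure embeddings $g_\ell : N \to M_\ell$ ($\ell = 1,2$) of flat modules, I would take the module-theoretic pushout $P$. The standard fact that pushouts of pure embeddings remain pure embeddings gives purity of both legs into $P$; flatness of $P$ then follows from the pure-exact sequence $0 \to N \to M_1 \oplus M_2 \to P \to 0$, whose middle term is flat and whose kernel $N$ is pure, forcing the cokernel to be flat via the $\mathrm{Tor}$ long exact sequence. No maximal models follows from $M \leq_{pp} M \oplus R$ for any flat $M$.

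For clauses (3) and (4), the crucial input is that Galois-types in the AEC of modules with pure embeddings coincide with complete $pp$-types. This is a standard consequence of the fact that pure embeddings preserve and reflect $pp$-formulas, combined with the amalgamation property established in clause (2). Granted this identification, a counting argument bounds $|\gS(M)|$ in terms of $\|M\|$ and $|R| + \aleph_0$, producing a cardinal $\theta_0 \geq |R| + \aleph_0$ such that $\Kf$ is $\lambda$-stable whenever $\lambda^{\theta_0} = \lambda$. Tameness is cleaner still: every $pp$-formula has only finitely many parameters and a $pp$-type is determined by the individual $pp$-formulas it contains, so any two distinct Galois-types of finite length must already be separated by a finite subset of their parameter domain. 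Hence $\Kf$ is $(<\aleph_0)$-tame for types of finite length, and one may take $\theta_1 := (|R| + \aleph_0)^+$. The main obstacle I anticipate is the flatness check in (2), the one step that genuinely uses membership in $\Kf$ rather than in the larger AEC of all modules; everything else reduces to routine translation from the ambient AEC or to direct $pp$-formula bookkeeping.
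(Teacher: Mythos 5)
Your treatment of clauses (1) and (2) is correct and more hands-on than the paper's, which simply observes that $\Kf$ is closed under pure submodules and is ``flat-like'' in the sense of \cite[6.11]{lrv}, then invokes \cite[6.20, 6.21]{lrv}. Your direct verification is sound: closing under witnesses for pp-formulas yields a pure (hence flat) submodule of the right size for the L\"{o}wenheim--Skolem clause, and for amalgamation the map $(g_1,-g_2)\colon N \to M_1 \oplus M_2$ is a pure embedding, so the pushout is a quotient of a flat module by a pure submodule and is therefore flat, with pure legs.

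The gap is in clauses (3) and (4). You assert that Galois types in $\Kf$ coincide with complete pp-types as ``a standard consequence'' of amalgamation plus the fact that pure embeddings preserve pp-formulas. Preservation gives only one direction (equal Galois types implies equal pp-types). The converse --- that tuples with the same pp-type over $M$ realize the same Galois type --- is exactly the hard step, and the standard proof of it passes through pure-injective envelopes: one embeds both configurations into pure-injective modules and applies Ziegler's theorem (Fact \ref{ziegler}) to extend the partial isomorphism to the hulls. In $\Kf$ this breaks down, because the pure-injective envelope of a flat module need not be flat, so the hulls need not lie in the class. This is precisely why the paper proves $\gtp = \pp$ (Lemma \ref{pp=gtp}), the $(<\aleph_0)$-tameness corollary, and the clean stability computation only in Section 4, under the additional Hypothesis \ref{hyp1} that pure-injective envelopes of flat modules are flat; the author explicitly calls the characterization of Galois types a ``missing piece'' of the general case. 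For Fact \ref{ffact}.(3)--(4) in full generality the paper instead derives stability and tameness abstractly from the existence of a stable independence notion on $\Kf$ (\cite[6.20, 6.21]{lrv} together with \cite[8.16]{lrv1}), with no concrete description of Galois types and with $\theta_0, \theta_1$ left unspecified. As written, your argument would yield the stronger, unjustified conclusions that one may take $\theta_0 = |R|+\aleph_0$ and that $\Kf$ is $(<\aleph_0)$-tame for an arbitrary ring --- conclusions the paper obtains only for the special rings of Section 4. You need either to restrict to Hypothesis \ref{hyp1} or to replace the pp-type identification with the independence-theoretic route.
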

\begin{proof}
Observe that in the class of flat modules, $N/M$ is flat if and only if $M$ is a pure submodule of $N$ (see for example \cite[11.1]{ste}). Therefore we can use the results obtained in \cite[\S 6]{lrv}. Since $\Kf$ is a flat-like category in the sense of \cite[6.11]{lrv}  and $\Kf$ is closed under pure submodules, then by \cite[6.20, 6.21]{lrv} it follows that $\Kf$ is an AEC with amalgamation and moreover it has a stable independence notion. Then by \cite[8.16]{lrv1} it follows that (3) and (4) hold. That joint embedding and no maximal models hold, follows from the fact that flat modules are closed under direct sums. 
\end{proof}

\begin{nota} Fix $\theta_0$ and $\theta_1$ as the least cardinals such that (3) and (4) of Fact \ref{ffact} hold. 

\end{nota}

A natural question to ask is the value of $\theta_0$ and $\theta_1$. We say more about this in the next section under additional assumptions, but for now we focus on proving the main theorem of the paper (Theorem \ref{main3}).

Since $\Kf$ has joint embedding, amalgamation and no maximal models, from Fact \ref{ffact} and Fact \ref{existence}, it follows
that $\Kf$ has a $(\lambda, \alpha)$-limit model if $\lambda^{\theta_0}=\lambda$ and $\alpha < \lambda^+$ is a limit ordinal. As hinted by previous results \cite{maz}, \cite{kuma}, limit models with chains of long cofinality are easier to understand than limit models with chains of small cofinality so we study these first. 

Before we characterize these limit models, we need to carefully work with some of the ideas of \cite{gh1} and \cite{gh2}. Recall the following definition.

\begin{defin}[{\cite[Def. 1]{gh1}}]
Let $I$ be a directed system, $(n_i)_{i \in I} \in \mathbb{N}^I$, $\bar{A}=(A_{ij})_{i\leq j}$ with $A_{ij}$ a $n_i \times n_j$ matrix with coefficients in $R$ and $(\bar{x}_i)_{i \in I}$ with $\bar{x}_i$ an $n_i$-tuple of variables. Given $M$ a left $R$-module and $\bb = (\bb_{ij})_{i \leq j} \in \Pi_{i \leq j} M^{n_i}$, we associate the system:
\[ \Omega^{\bar{A}}_{\bb} (\bar{x}_i)_{i\in I} := \{ \bar{x}_i - A_{ij} \bar{x}_j = \bb_{ij} \}_{i \leq j} .\]

We call a system of linear equations \emph{divisible of size $\lambda$} if it is of the form $\Omega^{\bar{A}}_{\bb} (\bar{x}_i)_{i\in I}$, for every $i \leq j \leq k$ it holds that $A_{jk} A_{ij}= A_{ik}$, for every $i \in I$ it holds that $A_{ii}=\id$ and $|I| =\lambda$. 

\end{defin}

The next assertion is a minor improvement of \cite[Cor. 3]{gh1}.

\begin{fact}\label{fcot}
A left $R$-module $M$  is cotorsion if and only if every finitely solvable divisible system of linear equations in $M$ of size at most $|R| + \aleph_0$ is solvable in $M$.
\end{fact}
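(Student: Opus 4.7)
For the forward direction, I would simply invoke \cite[Cor.~3]{gh1}: a cotorsion module $M$ solves every finitely solvable divisible system, hence in particular every such system of size at most $|R|+\aleph_0$.

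For the converse, by the backward direction of \cite[Cor.~3]{gh1} it suffices to prove that if every finitely solvable divisible system in $M$ of size $\leq |R|+\aleph_0$ is solvable, then every finitely solvable divisible system in $M$ is solvable. Given an arbitrary such system $\Omega^{\bar{A}}_{\bb}(\bar{x}_i)_{i\in I}$, the plan is to write $I$ as a continuous $\subseteq$-increasing union $\bigcup_{\alpha<\mu}J_\alpha$ of directed subsets with $|J_\alpha|\leq |R|+\aleph_0$. For each $\alpha$, the restriction of $\Omega$ to $J_\alpha$ is again finitely solvable and divisible and of size $\leq |R|+\aleph_0$, so it is solvable in $M$ by hypothesis; the task is then to arrange the solutions on the $J_\alpha$'s compatibly, so that they glue to a solution on $I$.

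The compatibility along the chain is essentially a $\varprojlim^{1}$-vanishing phenomenon on the inverse system of solution sets. To handle it, I would use that when nonempty the set of solutions on $J_\alpha$ is a torsor over the solution set of the associated homogeneous system (same $A_{ij}$'s, all $\bb_{ij}=0$). Extending a chosen solution on $J_\alpha$ to one on $J_{\alpha+1}$ then reduces, by a correction argument using the cocycle identity $A_{jk}A_{ij}=A_{ik}$, to solving an auxiliary divisible system of size $\leq |R|+\aleph_0$, which will be finitely solvable in $M$ by the global finite solvability of $\Omega$, and hence solvable by hypothesis. At limit stages the compatible choices glue by continuity.

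The hard part I anticipate is verifying that the auxiliary extension system at each successor stage is genuinely divisible in the sense of the definition, with finite solvability inherited from $\Omega$. Both should follow from routine cocycle bookkeeping, paralleling the backward direction of \cite[Cor.~3]{gh1}; the only new ingredient beyond that proof is the uniform bound of $|R|+\aleph_0$ on the size of the systems one needs to solve at each stage.
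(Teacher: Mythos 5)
Your forward direction is exactly the paper's. Your converse, however, takes a genuinely different route from the paper's, and it has a real gap. The paper never tries to pass from small systems to arbitrary systems at the level of linear systems: it invokes \cite[Prop.~2]{bee} to reduce cotorsionness to $Ext^1(F,M)=0$ for flat $F$ with $|F|\le |R|+\aleph_0$, observes via Lazard's theorem that such an $F$ is a directed colimit of finitely generated free modules over a directed index set of size at most $|R|+\aleph_0$, and then reruns the argument of \cite{gh1} for that small directed system. The cardinality reduction is carried entirely by the deconstructibility of flat modules (Eklof's lemma), not by gluing solutions along a filtration of the index set.

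The gap in your gluing is the successor step. Extending a chosen solution on $J_\alpha$ to one on $J_{\alpha+1}$ amounts to extending a homogeneous solution, i.e.\ to showing that the restriction map $\varprojlim_{J_{\alpha+1}} M^{n_i}\to \varprojlim_{J_\alpha} M^{n_i}$ (transition maps the $A_{ij}$) hits your correction term; and the auxiliary system expressing this is \emph{not} finitely solvable merely because $\Omega$ is. Concretely, take $M=\mathbb{Z}$, the homogeneous divisible system over $\omega$ with $A_{ij}=2^{j-i}$, and $J_\alpha=\{0\}$: the system is finitely solvable (by zero), the solution $x_0=1$ on $J_\alpha$ does not extend, and the constrained system $\{x_0=1,\ x_0-2x_1=0\}$ is already finitely unsolvable. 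So finite solvability of your correction system is an additional divisibility condition on $M$ that must be extracted from the hypothesis in a structured way, and arranging compatible choices across all $\alpha$ (your $\varprojlim^{1}$-vanishing) is precisely the content of the Eklof-lemma/deconstructibility step that the paper imports from \cite{bee}. As written, ``routine cocycle bookkeeping'' does not close this; if you want a purely syntactic proof you would in effect have to reprove that deconstructibility, so citing \cite[Prop.~2]{bee} as the paper does is the efficient fix.
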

\begin{proof}
The forward direction is \cite[Cor. 3]{gh1}. For the backward direction, recall that to show that $M$ is cotorsion it is enough to show, by \cite[Prop. 2]{bee}, that $Ext^1(F, M)= 0$ for every flat module $F$ of cardinality at most $|R| + \aleph_0$. Then remember that in Lazard's Theorem the index set $I$ to get a flat module $F$ as a direct limit of finitely generated free $R$-modules is contained in $\{ (J, N) : J \subseteq_{\text{fin}} F \times \mathbb{Z} \text{ and } N \leq R^{( F \times \mathbb{Z})}  \text{ finitely generated} \}$ (see for example \cite[ 8.16]{osb}) and this set has size at most $|R| + \aleph_0$ if $|F| \leq |R| + \aleph_0$. Then by repeating the argument given in \cite[p. 3,4]{gh1} one can obtain the result.
\end{proof}

With this we are able to characterize limit models of big cofinality.

\begin{theorem}\label{bigpi2}
Assume $\lambda\geq (|R| + \aleph_0)^+$. If $M$ is a $(\lambda,
\alpha)$-limit model in $\Kf$ and $\cof(\alpha)\geq (|R| + \aleph_0)^+$, then $M$ is a cotorsion module.
\end{theorem}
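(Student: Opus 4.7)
The plan is to apply Fact~\ref{fcot}: to show $M$ is cotorsion it suffices to solve every finitely solvable divisible system of linear equations in $M$ of size at most $\kappa := |R| + \aleph_0$. Fix such a system $\Omega = \Omega^{\bar{A}}_{\bb}(\bar{x}_i)_{i \in I}$ with $|I| \leq \kappa$ and parameters $\bb_{ij} \in M$. Writing $M = \bigcup_{i < \alpha} M_i$, the cofinality hypothesis $\cof(\alpha) \geq \kappa^+$ together with the bound $|\Omega| \leq \kappa$ on the parameter set forces all $\bb_{ij}$ into some $M_{i_0}$; and because $M_{i_0} \leq_{pp} M$ and solvability of a finite subsystem is a pp-statement with parameters in $M_{i_0}$, the system $\Omega$ is still finitely solvable in $M_{i_0}$.

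Next I produce a full solution in a flat extension of $M_{i_0}$. Let $C(M_{i_0})$ be the cotorsion envelope of $M_{i_0}$; by Fact~\ref{xu1} it is flat and contains $M_{i_0}$ as a pure submodule. Finite solvability of $\Omega$ in $M_{i_0}$ then lifts to finite solvability in $C(M_{i_0})$, and since $C(M_{i_0})$ is cotorsion with $|\Omega| \leq \kappa$, Fact~\ref{fcot} produces a full solution $(\bar{c}_i)_{i\in I}$ inside $C(M_{i_0})$.

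Finally I pull this solution back into $M$ using universality. Since $\LS(\Kf) = \kappa$ and $\lambda \geq \kappa^+$, there is $N'$ with $N' \leq_{pp} C(M_{i_0})$, $\|N'\| = \lambda$, and $M_{i_0} \cup \{\bar{c}_i : i \in I\} \subseteq N'$; as pure submodules of flat modules are flat, $N' \in \Kf_\lambda$, and $M_{i_0} \leq_{pp} N'$ is automatic from $M_{i_0} \leq_{pp} C(M_{i_0})$. Because $M_{i_0+1}$ is universal over $M_{i_0}$, there is a $\Kf$-embedding $f : N' \to M_{i_0+1}$ fixing $M_{i_0}$ pointwise; hence $f$ fixes every parameter $\bb_{ij}$, so $(f(\bar{c}_i))_{i\in I}$ solves $\Omega$ inside $M_{i_0+1} \subseteq M$. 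The main bookkeeping hurdle is this last step: keeping the cotorsion envelope inside $\Kf$ (Fact~\ref{xu1}), cutting it down to cardinality $\lambda$ via Löwenheim--Skolem without losing the witnesses or purity over $M_{i_0}$, and using that $f$ is over $M_{i_0}$ to transport the equations $\bar{x}_i - A_{ij}\bar{x}_j = \bb_{ij}$ verbatim into $M$.
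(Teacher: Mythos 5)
Your proof is correct and follows essentially the same route as the paper: reduce to Fact~\ref{fcot}, solve the system in a cotorsion envelope, then use downward L\"{o}wenheim--Skolem and universality of $M_{i_0+1}$ over $M_{i_0}$ to pull the solution back into $M$. The only (harmless) deviation is that you take the cotorsion envelope of $M_{i_0}$ rather than of $M$ itself, which is why you need the extra --- and correct --- observation that finite solvability descends from $M$ to $M_{i_0}$ by purity; the paper avoids this step by working with $C(M)$ directly.
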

\begin{proof} Fix $\{ M_\beta :\beta < \alpha\}$ a witness to the fact that $M$ is a
$(\lambda, \alpha)$-limit model. By Fact \ref{fcot} it is enough to show that every finitely solvable divisible system of linear equations in $M$ of size at most $|R| + \aleph_0$ is solvable in $M$. Let $\Omega^{\bar{A}}_{\bb} (\bar{x}_i)_{i\in I}$ be a divisible system of linear equations satisfying these hypotheses.

Consider $C(M)$ the cotorsion envelope of $M$, observe that $\Omega^{\bar{A}}_{\bb} (\bar{x}_i)_{i\in I}$ is finitely solvable in $C(M)$ because $M \leq_{pp} C(M)$. Since $C(M)$ is cotorsion, by Fact \ref{fcot} $\Omega^{\bar{A}}_{\bb} (\bar{x}_i)_{i\in I}$ is solvable in $C(M)$. Let $(\bar{c}_i)_{i \in I} \in \Pi_{i} C(M)^{n_i}$ be a solution.

Since $\cof(\alpha) \geq (|R| + \aleph_0)^+$ and $|I| \leq |R| + \aleph_0$, there is an ordinal $\beta< \alpha$ such that  $\{ \bb_{ij} : i \leq j \} \subseteq M_\beta$. Observe that $C(M) \in \Kf$ and $M_\beta \leq_{pp} C(M)$  by Fact \ref{xu1}. Then applying the downward L\"{o}wenheim-Skolem-Tarski axiom to $M_\beta \cup \{ \bar{c}_i : i \in I \}$ in $C(M)$ we obtain $M^* \in \Kf_\lambda$ such that $M_\beta \leq_{pp} M^*$ and $\{ \bar{c}_{i} : i \in I \} \subseteq  M^*$. Then there is $f: M^* \xrightarrow[M_\beta]{} M_{\beta+1}$, because  $M_{\beta+1}$ is universal over
$M_\beta$. Since $\{ \bb_{ij} : i \leq j \} $ is fixed by the
choice of $M_\beta$, it is easy to see that $\{ f(\bar{c}_i) : i \in I \}  \subseteq  M_{\beta+1}\leq_{pp}
M$ is a solution to $\Omega^{\bar{A}}_{\bb} (\bar{x}_i)_{i\in I}$ in $M$. Therefore, $M$ is cotorsion. \end{proof}

\begin{remark}
The reader might wonder if the limit model above is pure-injective instead of just cotorsion. This is not the case as the class of flat modules is not necessarily closed under pure-injective envelopes. We will study this with more detail in the next section.
\end{remark}

Since limit models are universal models by Fact \ref{euni}, the following follows from Fact \ref{ipi}.

\begin{cor}\label{easy} Let $\lambda$ be a cardinal. If $M, N$ are $\lambda$-limit models in $\Kf$ and cotorsion modules, then $M$ and $N$ are isomorphic.
\end{cor}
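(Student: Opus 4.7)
The plan is to combine the two facts the author explicitly cites. By Fact \ref{ffact}(2), $\Kf$ has the joint embedding property, so Fact \ref{euni} applies: any $\lambda$-limit model in $\Kf$ is a universal model in $\Kf_\lambda$. Moreover, by the remark following Definition \ref{limit}, any $\lambda$-limit model has cardinality $\lambda$, so both $M$ and $N$ belong to $\Kf_\lambda$.

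Next, I would extract pure embeddings in both directions. Since $M$ is universal in $\Kf_\lambda$ and $N \in \Kf_\lambda$, there is a $\Kf$-embedding $f \colon N \to M$; by the definition of $\Kf$, this is a pure embedding. Symmetrically, universality of $N$ gives a pure embedding $g \colon M \to N$.

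Finally, both $M$ and $N$ are cotorsion by hypothesis, so Fact \ref{ipi} applied to $f$ and $g$ yields $M \cong N$. There is no real obstacle here: the work has already been done in establishing that limit models are universal (Fact \ref{euni}) and in the generalization of Bumby's theorem for cotorsion modules (Fact \ref{ipi}); the corollary is just the juxtaposition of these with the joint embedding property of $\Kf$.
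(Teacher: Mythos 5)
Your proof is correct and follows exactly the paper's route: the paper derives the corollary in one line from Fact \ref{euni} (limit models are universal, using joint embedding from Fact \ref{ffact}) together with Fact \ref{ipi}. Your write-up just spells out the same juxtaposition in more detail.
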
 

Putting together the last two assertions we obtain.

\begin{cor}\label{uniqb} Assume $\lambda\geq (|R| + \aleph_0)^+$.
If $M \in \Kf$ is a $(\lambda, \alpha)$-limit model and $N \in \Kf$ is a $(\lambda,
\beta)$-limit model such that $\cof(\alpha), \cof(\beta) \geq (|R| + \aleph_0)^+$,
then $M$ and  $N$ are isomorphic. 
\end{cor}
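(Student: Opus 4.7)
The plan is to combine the two preceding results in the natural way. By Theorem \ref{bigpi2}, the hypothesis $\lambda \geq (|R| + \aleph_0)^+$ together with $\cof(\alpha) \geq (|R| + \aleph_0)^+$ tells us that $M$ is a cotorsion module; the same reasoning applied to $N$, using $\cof(\beta) \geq (|R| + \aleph_0)^+$, shows that $N$ is cotorsion.

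Once both $M$ and $N$ have been identified as cotorsion $\lambda$-limit models in $\Kf$, the conclusion follows immediately from Corollary \ref{easy}, which states that any two $\lambda$-limit models in $\Kf$ that happen to be cotorsion must be isomorphic. Recall that Corollary \ref{easy} itself rests on Fact \ref{euni} (limit models are universal in $\K_\lambda$, so $M$ embeds purely into $N$ and vice versa) and Fact \ref{ipi} (the Bumby-style result for cotorsion modules).

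Since the proof is just a two-line composition of already established facts, there is no real obstacle; the only thing to double-check is that the hypotheses of Theorem \ref{bigpi2} are met for both $M$ and $N$ separately, which is explicit in the statement of the corollary.
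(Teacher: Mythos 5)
Your proposal is correct and is exactly the paper's argument: the paper derives this corollary by ``putting together the last two assertions,'' i.e.\ Theorem \ref{bigpi2} to see that both $M$ and $N$ are cotorsion and Corollary \ref{easy} to conclude they are isomorphic. Nothing further is needed.
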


\begin{remark}  Conjecture 2 of \cite{bovan} asserts that for an AEC $\K$ and $\lambda \geq \LS(\K)$ a regular cardinal such that $\K$ is $\lambda$-stable, the regular ordinals $\alpha$'s  less  than $\lambda^+ $ such that the  $(\lambda, \alpha)$-limit model is isomorphic to the $(\lambda,\lambda)$-limit model is an end segment of regular cardinals. Observe that the above corollary shows that the conjecture is true for $\Kf$ if $R$ is a countable ring and $\lambda$ is a regular uncountable cardinal.
\end{remark}

Recall that two models are elementarily equivalent if they satisfy the same first-order sentences.  Surprisingly, one can still obtain that every two limit models in $\Kf$ are elementarily equivalent. The proof is basically the same as that of \cite[4.3]{kuma} so we omit it.

\begin{lemma}\label{elem}
If $M, N$ are limit models in $\Kf$, then $M$ and $N$ are elementarily equivalent.
\end{lemma}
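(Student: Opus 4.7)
The plan is to reduce elementary equivalence of limit models to coincidence of Baur--Monk pp-invariants and then use universality of limit models to show these invariants are intrinsic to the class (independent of the particular limit model), following the pattern of \cite[4.3]{kuma}. Concretely, I would invoke the classical pp-elimination result for the first-order theory of $R$-modules (Baur, Monk, Garavaglia; see \cite{prest}): every sentence in the language of $R$-modules is equivalent modulo the theory of $R$-modules to a Boolean combination of invariant sentences of the form ``$|\phi(X)/\psi(X)| \geq n$'' for pp-pairs $\psi \to \phi$ and $n \in \mathbb{N}$. Thus $M \equiv N$ will follow once one verifies that $|\phi(M)/\psi(M)| \geq n$ if and only if $|\phi(N)/\psi(N)| \geq n$ for every such pp-pair.

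For the key step, define $I(\phi/\psi) := \sup\{|\phi(F)/\psi(F)| : F \in \Kf\} \in \mathbb{N} \cup \{\infty\}$ and show that for every limit model $L$ in $\Kf$ one has $|\phi(L)/\psi(L)| = I(\phi/\psi)$. The upper bound is immediate since $L \in \Kf$. For the lower bound, given $n \leq I(\phi/\psi)$, pick $F \in \Kf$ and tuples $\bar a_1, \ldots, \bar a_n \in \phi(F)$ in pairwise distinct cosets modulo $\psi(F)$; applying the downward L\"owenheim--Skolem--Tarski axiom for $\Kf$ to the set of entries of these tuples produces $F' \leq_{pp} F$ with $F' \in \Kf$, $\|F'\| \leq |R| + \aleph_0$, and all $\bar a_i \in F'$. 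Since pure embeddings preserve pp-formulas in both directions, $\phi(F') = \phi(F) \cap F'$ and $\psi(F') = \psi(F) \cap F'$, so the cosets remain distinct in $F'$. Every limit model $L$ in $\Kf$ has $\|L\| \geq \LS(\Kf) = |R| + \aleph_0$ and is universal in $\Kf_{\|L\|}$ by Fact \ref{euni}; enlarging $F'$ to the flat module $F' \oplus R^{(\|L\|)} \in \Kf_{\|L\|}$ and applying universality yields a pure embedding of $F'$ into $L$, witnessing $|\phi(L)/\psi(L)| \geq n$.

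Combining the two steps, any pair of limit models $M, N$ in $\Kf$ satisfy $|\phi(M)/\psi(M)| = I(\phi/\psi) = |\phi(N)/\psi(N)|$ for every pp-pair, so $M \equiv N$ by Baur--Monk. The only technical point is the preservation-and-pullback argument ensuring that the small $F'$ produced by L\"owenheim--Skolem--Tarski still witnesses $n$ distinct $\psi$-cosets; this rests solely on pure embeddings preserving pp-formulas, and no other genuine obstacle arises.
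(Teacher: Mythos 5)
Your proposal is correct and follows essentially the same route as the paper, which omits the proof precisely because it is the Baur--Monk invariants argument of \cite[4.3]{kuma}: reduce elementary equivalence to pp-invariant sentences and use universality of limit models (Fact \ref{euni}), together with downward L\"owenheim--Skolem--Tarski and preservation of pp-formulas under pure embeddings, to see that every limit model attains the maximal invariants of the class.
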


We do not think that the previous result is as fundamental as the same result for classes axiomatized in first-order logic, see \cite[\S 4.1]{maz1}, but any how this will be useful when characterizing left perfect rings.

The next step will be to characterize limit models of countable cofinality. In order to do that we will need the following remark.

\begin{remark}\label{easyprop}
If $M, N$ are flat modules, $M$ is a cotorsion module and $M\leq_{pp} N$, then $M$ is a direct summand of $N$. This follows from the fact that $N/M$ is a flat module and the definition of cotorsion module.
\end{remark}

Using the above remark together with the fact that flat modules are closed under pure submodules and that cotorsion modules are closed under finite direct sums, we can construct universal extensions and characterize limit models of countable cofinality as in \cite[4.8, 4.9]{kuma}.

\begin{lemma}\label{uextension} Let $\lambda$ be a cardinal.
If $M \in \Kf_{\lambda}$ is cotorsion and $U \in
\Kf_{\lambda}$ is a universal model in $\Kf_{\lambda}$, then $M
\oplus U$ is universal over $M$.
\end{lemma}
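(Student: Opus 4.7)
The plan is to show directly that any $N \in \Kf_\lambda$ with $M \leq_{pp} N$ admits a pure embedding into $M \oplus U$ fixing $M$. The central input is Remark \ref{easyprop}: since $M$ is cotorsion, $N$ is flat, and $M \leq_{pp} N$, the inclusion $M \hookrightarrow N$ splits. Choose a retraction $r: N \to M$ with $r\rest M = \id_M$ and set $N' := \ker(r)$; then $N = M \oplus N'$ as an internal direct sum whose first summand is exactly the given copy of $M$. Moreover $N' \cong N/M$ is flat (since pure quotients of flat modules by pure submodules are flat, which is what lets us apply Fact \ref{xu1}/Remark \ref{easyprop} here), and $\|N'\| \leq \|N\| = \lambda$.

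The next step is to produce a pure embedding $g: N' \to U$. If $\|N'\| = \lambda$, this is immediate from the universality of $U$ in $\Kf_\lambda$. If instead $\|N'\| < \lambda$, I would first pad: let $F \in \Kf_\lambda$ be any flat module of cardinality $\lambda$ (for example $F = R^{(\lambda)}$), and set $N'' := N' \oplus F$. Then $N'' \in \Kf_\lambda$ because direct sums of flat modules are flat, and $N'$ is a direct summand, hence a pure submodule, of $N''$. Applying universality of $U$ produces a pure embedding $h: N'' \to U$, and $g := h\rest N'$ is then the desired pure embedding of $N'$ into $U$.

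Finally, I would assemble these: the map $\id_M \oplus g : M \oplus N' \to M \oplus U$ is a pure embedding, since tensoring with any right $R$-module commutes with direct sums, so the direct sum of the identity (pure) and $g$ (pure) remains injective after tensoring. By construction it fixes $M$ pointwise. Composing with the internal identification $N = M \oplus N'$ yields a pure embedding $f : N \to M \oplus U$ with $f\rest M = \id_M$, which is exactly what is required for $M \oplus U$ to be universal over $M$ in $\Kf_\lambda$.

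The only delicate point is the bookkeeping in the first paragraph: one must verify that the splitting coming from $\Ext^1(N/M, M) = 0$ can be arranged so that the embedded summand is literally the original $M$ and not just an isomorphic copy. This is automatic once one chooses a retraction $r$ with $r\rest M = \id_M$, which always exists because any splitting of the short exact sequence $0 \to M \to N \to N/M \to 0$ gives such an $r$. Everything else is a routine application of universality of $U$ together with closure of flat and cotorsion modules under the operations used.
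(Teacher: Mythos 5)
Your proof is correct and follows essentially the same route as the paper's: split $N = M \oplus N'$ via Remark \ref{easyprop}, embed the complement into $U$ by universality, and take the direct sum map fixing $M$. The only addition is your explicit padding argument for the case $\|N'\| < \lambda$, which the paper elides by simply noting $M' \in \Kf_{\leq\lambda}$ and invoking universality; your version makes that step airtight.
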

\begin{proof}
Let $N \in \Kf_\lambda$ with $M\leq_{pp} N$. By Remark \ref{easyprop} there is an $M'$ such that $N= M \oplus M'$. Since $\Kf$ is closed under pure submodules $M' \in \Kf_{\leq \lambda}$ and by universality of $U$ there is $f: M' \to U$. Then observe that $g: M\oplus M' \to M \oplus U$ given by $g(m + m')= m + f(m')$ is as required. \end{proof}

\begin{lemma}\label{ccountablelim} Assume $\lambda\geq
(|R| + \aleph_0)^+$. If $M \in \Kf$ is
a $(\lambda, \omega)$-limit model and $N \in \Kf$ is a
$(\lambda, (|R| + \aleph_0)^+)$-limit model, then $M$ and  $N^{(\aleph_0)}$ are isomorphic.
\end{lemma}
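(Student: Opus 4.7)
The plan is to show that $N^{(\aleph_0)}$ is itself a $(\lambda, \omega)$-limit model in $\Kf$, and then invoke the back-and-forth argument for limit models built from chains of the same cofinality to conclude $M \cong N^{(\aleph_0)}$.

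First, I would note that by Theorem \ref{bigpi2}, since $N$ is a $(\lambda, (|R|+\aleph_0)^+)$-limit model and $\cof((|R|+\aleph_0)^+) = (|R|+\aleph_0)^+$, the module $N$ is cotorsion. Moreover, by Fact \ref{euni} (applied using that $\Kf$ has joint embedding, Fact \ref{ffact}(2)), $N$ is a universal model in $\Kf_\lambda$. These are precisely the two hypotheses needed to feed into Lemma \ref{uextension}.

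Next, I would construct an increasing continuous chain $\{N_i : i \leq \omega\} \subseteq \Kf_\lambda$ by setting $N_0 := N$ and $N_{i+1} := N_i \oplus N$. By induction each $N_i$ is a finite direct sum of copies of $N$, hence flat (direct sums of flat modules are flat) and cotorsion (finite direct sums of cotorsion modules are cotorsion, since $\mathrm{Ext}^1(F, -)$ commutes with finite direct sums), and of cardinality $\lambda$. Applying Lemma \ref{uextension} with $M := N_i$ and $U := N$ (using that $N_i$ is cotorsion and $N$ is universal in $\Kf_\lambda$), we conclude that $N_{i+1} = N_i \oplus N$ is universal over $N_i$. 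Taking the union, $\bigcup_{i < \omega} N_i \cong N^{(\aleph_0)}$, which has cardinality $\lambda \cdot \aleph_0 = \lambda$ since $\lambda \geq (|R|+\aleph_0)^+ > \aleph_0$. Thus $N^{(\aleph_0)}$ witnesses a $(\lambda, \omega)$-limit model over $N$.

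Finally, both $M$ and $N^{(\aleph_0)}$ are $(\lambda, \omega)$-limit models in $\Kf$, and by the standard back-and-forth argument in AECs with amalgamation (which gives uniqueness of $(\lambda, \alpha)$-limit models for a fixed limit ordinal $\alpha$, as noted in the paragraph following Fact \ref{existence}), it follows that $M \cong N^{(\aleph_0)}$. The only delicate point is verifying at each induction step that $N_i$ remains cotorsion so that Lemma \ref{uextension} continues to apply; this is handled purely by the fact that cotorsion is preserved under finite direct sums, which is immediate from the definition via $\mathrm{Ext}^1$.
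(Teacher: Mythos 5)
Your proof is correct and follows essentially the same route as the paper: the paper likewise applies Theorem \ref{bigpi2} to see that $N$ is cotorsion, uses Lemma \ref{uextension} to show that the chain of finite powers $N, N\oplus N, \dots$ witnesses that $N^{(\aleph_0)}$ is a $(\lambda,\omega)$-limit model, and concludes by uniqueness of limit models of the same cofinality. Your write-up just makes explicit the details (universality of $N$ via Fact \ref{euni}, closure of cotorsion modules under finite direct sums) that the paper leaves implicit.
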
 
\begin{proof}
Let $N$ be a $(\lambda, (|R| + \aleph_0)^+)$-limit model. By Theorem \ref{bigpi2} $N$ is a cotorsion module. Then using the above lemma $\{ N^i : 0 < i < \omega \}$ is a witness to the fact that $N^{(\aleph_0)}$ is a $(\lambda, \omega)$-limit model. Therefore, $M$ is isomorphic to $N^{(\aleph_0)}$ .
\end{proof}

Let us recall the following results from \cite{sast}. They extended to uncountable rings the results of \cite{gh2}. 

\begin{fact}\label{sigmac} Let $R$ be a ring.
\begin{enumerate}
\item If $N$ is $\Sigma$-cotorsion and $M \leq_{pp} N$, then $M$ is $\Sigma$-cotorsion.
\item If $N$ is $\Sigma$-cotorsion and $M$ is elementarily equivalent to $N$, then $M$ is $\Sigma$-cotorsion.
\item  (\cite[3.8]{sast}) $M$ is $\Sigma$-cotorsion if and only if $M^{(|R|+\aleph_0)}$ is a cotorsion module.
\end{enumerate}
\end{fact}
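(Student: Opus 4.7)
The plan is to take (3) as the core characterization (cited directly from \cite[3.8]{sast}) and to derive (1) and (2) from it using standard pp-theoretic arguments. Set $\kappa := |R| + \aleph_0$. By (3), a module $P$ is $\Sigma$-cotorsion if and only if $P^{(\kappa)}$ is cotorsion, and by Fact \ref{fcot} the latter is in turn equivalent to the solvability in $P^{(\kappa)}$ of every finitely solvable divisible system of linear equations of size at most $\kappa$.

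For (1), given $M \leq_{pp} N$ with $N$ being $\Sigma$-cotorsion, direct sums preserve pure embeddings, so $M^{(\kappa)} \leq_{pp} N^{(\kappa)}$, and by (3) applied to $N$ the module $N^{(\kappa)}$ is cotorsion. Let $\Omega^{\bar{A}}_{\bb}(\bar{x}_i)_{i \in I}$ be a divisible system in $M^{(\kappa)}$ of size at most $\kappa$ that is finitely solvable in $M^{(\kappa)}$. Each finite subsystem is pp-expressible in the parameters $\bb_{ij}$, so finite solvability ascends to $N^{(\kappa)}$, and cotorsionness of $N^{(\kappa)}$ yields a full solution there. To bring the solution back to $M^{(\kappa)}$ one argues as in the corresponding result of \cite{sast}: exploiting the divisibility relation $A_{jk}A_{ij} = A_{ik}$, the system is sufficiently constrained that the pure embedding $M^{(\kappa)} \leq_{pp} N^{(\kappa)}$ can be used to exhibit a solution lying entirely in $M^{(\kappa)}$. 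Hence $M^{(\kappa)}$ is cotorsion and $M$ is $\Sigma$-cotorsion.

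For (2), apply the Keisler--Shelah ultrapower theorem: since $M \equiv N$, there is an ultrafilter $D$ such that $M^{D} \cong N^{D}$, and both $M$ and $N$ embed purely into this common ultrapower. By (3), $\Sigma$-cotorsion of $N$ is witnessed by cotorsion of $N^{(\kappa)}$, which (via Fact \ref{fcot}) is a solvability property for divisible systems of bounded size. This property is preserved when passing to the ultrapower $(N^{(\kappa)})^{D} \cong (N^{D})^{(\kappa)}$ by \L{}o\'{s}'s theorem applied to the pp-formulas encoding each finite subsystem. Hence $N^{D}$ is $\Sigma$-cotorsion, and applying (1) to the pure embedding $M \hookrightarrow N^{D}$ yields that $M$ is $\Sigma$-cotorsion.

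The main obstacle is the transfer step in (1): showing that a solution in $N^{(\kappa)}$ can be refined to one in $M^{(\kappa)}$. Pure-preservation of individual pp-formulas does not automatically give this for infinite systems, so one must leverage the specific divisibility compatibility of the system together with the fact that we are working inside a large direct sum, which is essentially where the strength of \cite[3.8]{sast} comes in.
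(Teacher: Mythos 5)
Your route is genuinely different from the paper's: the paper gets (1) and (2) in one line from \cite[3.3]{sast}, which says that being $\Sigma$-cotorsion depends only on the definable subcategory generated by the module, combined with the fact that definable subcategories are closed under pure submodules and under elementary equivalence. You instead try to rebuild (1) and (2) from (3) together with the divisible-systems criterion of Fact \ref{fcot}. Unfortunately, the step you yourself flag as ``the main obstacle'' in (1) is not a technicality: the principle you need is false. Solvability of an infinite divisible system does \emph{not} descend along pure embeddings --- if it did, every pure submodule of a cotorsion module would be cotorsion, which already fails for $\mathbb{Z} \leq_{pp} C(\mathbb{Z})$ (purity is Fact \ref{xu1}, and $\mathbb{Z}$ is not cotorsion since $Ext^1(\mathbb{Q},\mathbb{Z})\neq 0$). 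Your argument for (1) only ever uses that $N^{(|R|+\aleph_0)}$ is cotorsion and that $M^{(|R|+\aleph_0)} \leq_{pp} N^{(|R|+\aleph_0)}$, which is exactly the configuration in which descent fails; the extra strength of $\Sigma$-cotorsion has to enter through a chain condition on pp-definable subgroups, which is the content of \cite[3.3]{sast}, not of \cite[3.8]{sast} (the latter is statement (3) itself, so citing it for the descent is circular).

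The argument for (2) has two further independent gaps. First, ultrapowers do not commute with infinite direct sums: $(N^{D})^{(|R|+\aleph_0)}$ is only canonically a (pure) submodule of $(N^{(|R|+\aleph_0)})^{D}$, not isomorphic to it. Second, cotorsion is not transferred to ultrapowers by \L{}o\'{s}'s theorem, because global solvability of an infinite divisible system is not captured by the pp-formulas expressing its finite subsystems. The Keisler--Shelah reduction of (2) to (1) is a reasonable idea, but it requires preservation of $\Sigma$-cotorsion under ultrapowers, which again comes from the definable-subcategory formulation in \cite[3.3]{sast} rather than from anything you have established. The fix is to do what the paper does: invoke \cite[3.3]{sast} directly and use that the definable subcategory generated by $N$ contains all pure submodules of $N$ and all modules elementarily equivalent to $N$.
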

\begin{proof} (1) and (2) follow from \cite[3.3]{sast} and using that the definable subcategory generated by a module is closed under pure submodules and elementarily equivalent modules. 
\end{proof}

The next theorem is the main theorem of the paper. 

\begin{theorem}\label{main3} For a ring $R$ the following are equivalent.
\begin{enumerate}
\item $R$ is left perfect.
\item The class of flat left $R$-modules with pure embeddings is superstable.
\item There exists a $\lambda \geq (|R| + \aleph_0)^+$ such that the class of flat left $R$-modules with pure embeddings has uniqueness of limit models of cardinality $\lambda$.
\item Every limit model in the class of flat left $R$-modules with pure embeddings is $\Sigma$-cotorsion.
\end{enumerate}
\end{theorem}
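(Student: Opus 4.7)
My plan is to prove the cycle $(1)\Rightarrow(4)\Rightarrow(2)\Rightarrow(3)\Rightarrow(1)$. The implication $(1)\Rightarrow(4)$ is immediate from Fact~\ref{xu2}: if $R$ is left perfect, every flat module is cotorsion, and since $M^{(I)}$ remains flat whenever $M$ is, it is also cotorsion; thus every flat (in particular every limit model) is $\Sigma$-cotorsion. The implication $(2)\Rightarrow(3)$ is immediate from the definition of superstability. For $(4)\Rightarrow(2)$: $\Sigma$-cotorsion limit models are in particular cotorsion, and two $\lambda$-limit models are mutually universal in $\Kf_\lambda$ by Fact~\ref{euni}, hence isomorphic by Fact~\ref{ipi}. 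This gives uniqueness of $\lambda$-limit models at every cardinal where they exist; combined with cofinal stability from Fact~\ref{ffact}(3) and the equivalent characterizations of superstability for tame AECs with amalgamation, joint embedding and no maximal models (\cite{grva}, \cite{vaseyt}), we obtain superstability.

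The heart of the argument is $(3)\Rightarrow(1)$. Set $\mu := |R|+\aleph_0$ and fix $\lambda \geq \mu^+$ at which $\Kf$ has uniqueness of $\lambda$-limit models; let $N$ be the $(\lambda, \mu^+)$-limit model, which is cotorsion by Theorem~\ref{bigpi2}. The key technical step is to upgrade $N$ to being $\Sigma$-cotorsion. Fix an infinite cardinal $\kappa \leq \lambda$. Iterating Lemma~\ref{uextension} starting from $N$, I build a continuous chain $(M_\beta)_{\beta<\kappa}$ in $\Kf_\lambda$ with $M_0 = N$ and $M_{\beta+1} = M_\beta \oplus N$: at a limit ordinal $\beta<\kappa$ the partial union $M_\beta$ is itself a $(\lambda, \beta)$-limit model, so by uniqueness at $\lambda$ is isomorphic to the cotorsion $N$, which supplies the cotorsion hypothesis that Lemma~\ref{uextension} requires to continue the chain past $\beta$. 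The final union $\bigcup_{\beta<\kappa} M_\beta \cong N^{(\kappa)}$ is then a $(\lambda,\kappa)$-limit model, again isomorphic to $N$ by uniqueness and therefore cotorsion. Instantiating at $\kappa=\mu$ and invoking Fact~\ref{sigmac}(3) yields that $N$ is $\Sigma$-cotorsion.

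To conclude, observe that $R$ viewed as a left $R$-module is flat of cardinality at most $\mu \leq \lambda$, so universality of $N$ in $\Kf_\lambda$ (Fact~\ref{euni}) supplies a pure embedding $R \hookrightarrow N$, and by Fact~\ref{sigmac}(1) the regular module $R$ is itself $\Sigma$-cotorsion. A characterization of left perfect rings via $\Sigma$-cotorsion of the regular module (from \cite{gh3}) then delivers $(1)$. The main obstacle is pushing the chain construction in the upgrade step through limit ordinals $\beta<\kappa$: Lemma~\ref{uextension} demands cotorsion of the model being extended, and it is precisely uniqueness of limit models at $\lambda$ that supplies this by identifying each intermediate $(\lambda,\beta)$-limit stage with the cotorsion model $N$.
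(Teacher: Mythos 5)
Your cycle $(1)\Rightarrow(4)\Rightarrow(2)\Rightarrow(3)\Rightarrow(1)$ reorganizes the paper's cycle $(1)\Rightarrow(2)\Rightarrow(3)\Rightarrow(4)\Rightarrow(1)$, and most of the content matches: $(1)\Rightarrow(4)$ is the same trivial use of Fact \ref{xu2}, and the ``heart'' of your $(3)\Rightarrow(1)$ --- iterating Lemma \ref{uextension} to build the chain $N^{(\beta)}$, using uniqueness of $\lambda$-limit models at limit stages to re-identify $N^{(\beta)}$ with the cotorsion model $N$, and concluding via Fact \ref{sigmac}(3) that $N$ is $\Sigma$-cotorsion --- is exactly the paper's proof of $(3)\Rightarrow(4)$. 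Your endgame differs: instead of embedding an arbitrary flat module into a limit model and invoking Fact \ref{xu2} (which requires first transferring $\Sigma$-cotorsion to limit models of \emph{all} cardinalities via Lemma \ref{elem} and Fact \ref{sigmac}(2)), you purely embed only the regular module $_RR$ into $N$ and invoke the Guil Asensio--Herzog theorem from \cite{gh3} that $_RR$ being $\Sigma$-cotorsion forces $R$ to be left perfect. That is a legitimate shortcut which avoids Lemma \ref{elem} entirely, at the price of importing a substantial external theorem that the paper only alludes to; you should state it precisely (and check the left/right conventions), and note that to embed $R$ into $N$ via Fact \ref{euni} you should first pass to $R\oplus R^{(\lambda)}$ to reach cardinality exactly $\lambda$.

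The genuine gap is in $(4)\Rightarrow(2)$. From (4) you correctly get uniqueness of $\lambda$-limit models at every $\lambda$ where they \emph{exist}, but existence at $\lambda$ is equivalent to $\lambda$-stability (Fact \ref{existence}), and Fact \ref{ffact}(3) only gives stability at the cardinals $\lambda$ with $\lambda^{\theta_0}=\lambda$ --- a cofinal class, not a tail. Superstability as defined here demands uniqueness, hence existence, on a \emph{tail} of cardinals, so you must prove stability on a tail, and your appeal to \cite{grva}/\cite{vaseyt} does not discharge this: the equivalences there are between conditions of the form ``for all sufficiently large $\lambda$,'' not ``for cofinally many $\lambda$,'' and the thresholds involved are not controlled by what you have established. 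The paper closes exactly this gap with a dedicated induction in $(1)\Rightarrow(2)$: for each $\lambda$ above the first stability cardinal it builds a universal $M\in\Kf_\lambda$ as a union of universal extensions of strictly increasing cardinalities, uses left perfectness to see that this $M$ (which is flat but is \emph{not} a limit model, since its chain members have different sizes) is cotorsion, and then Lemma \ref{uextension} makes $M^{(\aleph_0)}$ a $(\lambda,\omega)$-limit model, yielding $\lambda$-stability. Hypothesis (4) alone only tells you that \emph{limit models} are cotorsion, which is not enough to run this induction directly. The clean repair is to reroute: first prove $(4)\Rightarrow(1)$ (e.g.\ by your gh3 argument, or by embedding each flat module purely into a limit model and using Fact \ref{sigmac}(1) with Fact \ref{xu2}), and then obtain (2) from (1) by the induction just described.
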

\begin{proof}
$(1) \Rightarrow (2)$   By Fact \ref{ffact}.(3) there is a $\theta_0 \geq |R| + \aleph_0$ such that $\Kf$ is $\lambda$-stable if $\lambda^{\theta_0}=\lambda$. Let $\lambda_0$ be the least $\lambda$ such that $\Kf$ is $\lambda_0$-stable, we claim that for every $\lambda \geq \lambda_0$, $\Kf$ has uniqueness of limit models of size $\lambda$.

By Fact \ref{xu2} every flat module is a cotorsion module. Then by Corollary \ref{easy} there is at most one $\lambda$-limit model for each $\lambda$ up to isomorphisms. To finish the proof, we show by induction that  for every $\lambda \geq \lambda_0$, $\Kf$ is $\lambda$-stable.

The base step follows from the choice of $\lambda_0$, so we do the induction step. 

Suppose $\lambda$ is an infinite cardinal and that $\Kf$ is $\mu$-stable for every $\mu \in [\lambda_0, \lambda)$. Let $\cof(\lambda)=\kappa$ and $\{\lambda _i  : i < \kappa \}$ be a continuous increasing sequence of cardinals such that $\lambda_i < \lambda$ for each $i < \kappa$ and $sup_{i < \kappa} \lambda_i=\lambda^{-}$.\footnote{For $\theta$ a cardinal, we define $\theta^{-}=\mu$ if $\theta=\mu^+$ and $\theta^{-}=\theta$ otherwise.}  Using the hypothesis that $\Kf$ is $\mu$-stable for every $\mu \in [\lambda_0, \lambda)$, one can build $\{ M_i : i < \kappa \}$ strictly increasing and continuous chain such that:

\begin{enumerate}
\item $M_{i+1}$ is $\|M_{i+1}\|$-universal over $M_i$.
\item $M_i \in \Kf_{\lambda_i}$.
\end{enumerate}

Let $M = \bigcup_{i< \kappa} M_i$. By construction $M$ is universal in $\Kf_\lambda$.\footnote{A similar construction is presented in \cite[3.18]{kuma}} Since $R$ is left perfect, $M$ is a cotorsion module. Then using Lemma \ref{uextension}, as in Lemma \ref{ccountablelim}, one can show that $\{ M^i : 0< i < \omega \}$ witnesses that $M^{(\aleph_0)}$ is a $(\lambda, \omega)$-limit model in $\Kf$. Hence $\Kf$ is $\lambda$-stable by Fact \ref{existence}.

$(2) \Rightarrow (3)$ Clear.

$ (3) \Rightarrow (4)$ We show that if $N$ is the $(\lambda, (|R| + \aleph_0)^+)$-limit model, then $N$ is $\Sigma$-cotorsion. This is enough by Lemma \ref{elem} and Fact \ref{sigmac}.(2).

Consider $\{N^{(\gamma)} : 0 < \gamma \leq  |R| + \aleph_0 \} \subseteq \Kf_\lambda$, we show by induction on $0< \gamma \leq |R|+\aleph_0$ that:

\begin{enumerate}
\item $N^{(\gamma)}$ is cotorsion.
\item $N^{(\gamma +1)}$ is universal over $N^{(\gamma)}$.
\end{enumerate}

Before we do the proof, observe that this is enough since by taking $\gamma =|R|+\aleph_0$ we have that $N^{(|R| + \aleph_0)}$ is cotorsion. Then by Fact \ref{sigmac} $N$ is $\Sigma$-cotorsion.

\underline{Base:} $N$ is cotorsion by Theorem \ref{bigpi2}, so (1) holds. Moreover, $N \oplus N$ is universal over $N$ by Lemma \ref{uextension}.

\underline{Induction step:} If $\gamma = \beta +1$, then $N^{(\beta +1)}$ is cotorsion because $N^{(\beta)}$ is cotorsion by induction hypothesis, $N$ is cotorsion and cotorsion modules are closed under finite direct sums. As for (2), this follows from Lemma \ref{uextension}.

If $\gamma$ is a limit ordinal, then consider $\{ N^{(\beta)} : 0<\beta < \gamma \}$. It is clear that it is an increasing and continuous chain in $\Kf_\lambda$ such that $\bigcup_{\beta < \gamma }N^{(\beta)} = N^{(\gamma)}$. Moreover, by induction hypothesis $N^{(\beta + 1)}$ is universal over $N^{(\beta)}$ for $\beta < \gamma$. Therefore, $\{ N^{(\beta)} : 0<\beta < \gamma \}$ witnesses that $N^{(\gamma)}$ is a $(\lambda, \gamma)$-limit model.  Then by uniqueness of limit models of size $\lambda$, $N^{(\gamma)}$ is isomorphic to $N$. We know that $N$ is cotorsion by Theorem \ref{bigpi2}, hence $N^{(\gamma)}$ is a cotorsion module. That $N^{(\gamma+ 1)}$ is universal over $N^{(\gamma)}$ then follows from Lemma \ref{uextension}.

$(4) \Rightarrow (1)$ Let $M \in \Kf$, by Fact \ref{xu2}  it is enough to show that $M$ is cotorsion. Let $\mu \geq \| M \| + (|R| + \aleph_0)^+$ such that $\Kf$ is $\mu$-stable, which exists by Fact \ref{ffact}.(3). Then fix $P \in \Kf$ a $(\mu, (|R| + \aleph_0)^+)$-limit model. Observe that by Fact \ref{euni} there is $f: M \to P$ a pure embedding. Since $M$ is $\Sigma$-cotorsion by hypothesis and $\Sigma$-cotorsion modules are closed under pure submodules by  Fact \ref{sigmac}. We conclude that $M$ is a cotorsion module. \end{proof}

\begin{remark} It was pointed out to us by Baldwin that in \cite[p. 159]{gar} the following is shown for right coherent rings: if $R$ is a left perfect ring, then every projective left $R$-module is totally transcendental. This can be used to show (1) implies (2) of the above theorem in the particular case when the ring is right coherent. This case is even more special than the one we consider in the next section (see Hypothesis \ref{hyp1}) since if a  ring $R$ is right coherent, then the class of flat left $R$-modules is first-order axiomatizable ( \cite[Theo. 4]{eksa}). 
\end{remark}

As a simple corollary we obtain a characterization of artinian rings via superstability.

\begin{cor}\label{art} For a ring $R$ the following are equivalent.
\begin{enumerate}
\item $R$ is right artinian.
\item $\Kf$ is superstable and the class of right $R$-modules with embeddings is superstable. 
\end{enumerate}
\end{cor}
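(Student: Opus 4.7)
The plan is to decompose condition (2) using Theorem \ref{main3} and the characterization of noetherian rings via superstability from \cite[3.12]{maz1}. Theorem \ref{main3} gives that $\Kf$ is superstable if and only if $R$ is left perfect, while \cite[3.12]{maz1} (applied to the opposite ring) gives that the class of right $R$-modules with embeddings is superstable if and only if $R$ is right noetherian. Hence condition (2) is equivalent to $R$ being both left perfect and right noetherian, and the corollary reduces to the purely ring-theoretic equivalence that $R$ is right artinian if and only if $R$ is right noetherian and left perfect.

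For the direction from (1) to the two ring-theoretic conditions, assume $R$ is right artinian. By the Hopkins--Levitzki theorem, $R$ is right noetherian. Moreover, Hopkins's theorem for right artinian rings gives that $J(R)$ is nilpotent and $R/J(R)$ is semisimple artinian; nilpotence trivially implies left T-nilpotence of $J(R)$, so by Bass's characterization $R$ is left perfect.

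For the converse, assume $R$ is right noetherian and left perfect. Then $J(R)$ is left T-nilpotent, $R/J(R)$ is semisimple, and $J(R)$ is finitely generated as a right ideal, say by $x_1,\dots,x_n$. An induction on $k$ shows that $J(R)^k$ is generated as a right ideal by the products $x_{i_1}\cdots x_{i_k}$ with each $i_\ell\in\{1,\dots,n\}$. Consider the finitely branching tree whose length-$k$ nodes are such index sequences and whose root is the empty sequence; left T-nilpotency kills every infinite branch, so by K\"onig's lemma the tree is finite and there is $N$ with $J(R)^N=0$. The filtration $R\supset J(R)\supset\cdots\supset J(R)^N=0$ then has successive quotients that are finitely generated right modules over the semisimple artinian ring $R/J(R)$; each such quotient has finite length, hence $R$ has finite length as a right $R$-module, i.e., $R$ is right artinian.

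The main obstacle is really just locating and applying the correct form of the classical ring-theoretic equivalence; the model-theoretic content is already packaged into Theorem \ref{main3} and \cite[3.12]{maz1}, so the work lies entirely in the nilpotency-via-K\"onig argument that bridges left perfect plus right noetherian to right artinian.
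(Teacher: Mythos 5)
Your proposal is correct and follows exactly the paper's decomposition: Theorem \ref{main3} handles ``$\Kf$ superstable iff $R$ left perfect,'' \cite[3.12]{maz1} handles ``right $R$-modules with embeddings superstable iff $R$ right noetherian,'' and everything reduces to the classical equivalence ``right artinian iff right noetherian and left perfect.'' The only difference is that the paper simply cites \cite[Prop.\ 3]{eksa} for that last equivalence, whereas you supply a correct self-contained proof of it via Hopkins--Levitzki, Bass's characterization of perfect rings, and the K\"onig's lemma argument deducing nilpotence of $J(R)$ from T-nilpotence plus finite generation.
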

\begin{proof}
It is known that a ring is right artinian if and only if it is left perfect and right noetherian (see for example \cite[Prop. 3]{eksa}). Moreover, $R$ is left perfect if and only if $\Kf$ is superstable by the theorem above. And $R$ is right noetherian if and only if the class of right $R$-modules with embeddings is superstable by \cite[3.12]{maz1}. \end{proof}

\section{A special case}

In this section we study $\Kf$ under Hypothesis \ref{hyp1} (see below). This allows us to characterize Galois-types, bound the values of $\theta_0$, $\theta_1$ and lower the bound in Theorem \ref{main3} where the tail of cardinals where uniqueness of limit models begins to $|R|+ \aleph_0$.

 We assume the next hypothesis throughout this section.

\begin{hypothesis}\label{hyp1}
The pure-injective envelope of every flat left $R$-module is flat.
\end{hypothesis}

These rings were characterized by Rothmaler in \cite{roth}. Every first-order axiomatizable class of flat modules satisfies this hypothesis since $M$ is an elementary substructure of its pure-injective envelope. Example 3.3 of \cite{roth} shows that there are rings satisfying Hypothesis \ref{hyp1} such that $\K^{\eff}$ is not first-order axiomatizable. This shows that the results in this section extend those obtained in \cite{kuma} for the class of flat modules.

One of the characterizations obtained in \cite{roth} that will be useful in this section is the following.

\begin{fact}[{\cite{roth}}]\label{requiv} For a ring $R$ the following are equivalent.
\begin{enumerate}
\item Hypothesis \ref{hyp1}, i.e., the pure-injective envelope of every flat left $R$-module is flat.
\item All flat cotorsion left $R$-modules are pure-injective.
\end{enumerate}

\end{fact}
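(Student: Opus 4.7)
The plan is to prove both implications by playing the pure-injective envelope and the cotorsion envelope of a flat module off against one another. Two background ingredients will carry most of the weight: within the class of flat modules a pure embedding $M \leq_{pp} N$ is equivalent to $N/M$ being flat (this is used already in the proof of Fact \ref{ffact}), and the cotorsion envelope $C(M)$ of a flat module $M$ is itself flat and satisfies $M \leq_{pp} C(M)$, by Fact \ref{xu1}.

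For the direction $(1) \Rightarrow (2)$, let $M$ be a flat cotorsion module and let $H$ denote its pure-injective envelope. By Hypothesis \ref{hyp1}, $H$ is flat, so the pure embedding $M \leq_{pp} H$ forces $H/M$ to be flat. Since $M$ is cotorsion, the short exact sequence
\[ 0 \to M \to H \to H/M \to 0 \]
splits, exhibiting $M$ as a direct summand of $H$. The defining minimality of the pure-injective envelope (no proper direct-summand decomposition of $H$ can have $M$ lying in one summand) then forces $M = H$, and hence $M$ is pure-injective.

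For the direction $(2) \Rightarrow (1)$, let $F$ be a flat module and form its cotorsion envelope $C(F)$. By Fact \ref{xu1}, $C(F)$ is flat with $F \leq_{pp} C(F)$, so $C(F)$ is a flat cotorsion module and is therefore pure-injective by hypothesis (2). The pure embedding $F \leq_{pp} C(F)$ into a pure-injective module extends to a pure embedding of the pure-injective envelope $PE(F)$ into $C(F)$, and since $PE(F)$ is itself pure-injective its image splits off as a direct summand of $C(F)$. Because direct summands of flat modules are flat, $PE(F)$ is flat, which is exactly Hypothesis \ref{hyp1}.

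The main points to verify carefully are the two direct-summand invocations of the pure-injective envelope: that $PE(M)$ admits no proper direct-summand decomposition with $M$ contained in one summand (used in the first direction to conclude $M = H$), and that $PE(F)$ embeds as a direct summand of any pure-injective pure extension of $F$ (used in the second direction). Both are standard consequences of the construction of the pure-injective envelope as a minimal pure-essential pure-injective extension; they, together with the flat/pure dictionary and the existence of cotorsion envelopes (i.e.\ the Flat Cover Conjecture, already used through Fact \ref{xu1}), are the only nontrivial inputs the argument needs.
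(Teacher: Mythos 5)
Your argument is correct. Note, however, that the paper does not prove this statement at all: it is recorded as a \emph{Fact} and attributed wholesale to Rothmaler's paper \cite{roth}, so there is no in-text proof to compare against. What you have written is essentially a reconstruction of the standard argument from the literature. For $(1)\Rightarrow(2)$ you use that a pure submodule of a flat module has flat quotient, so cotorsion-ness splits $M$ off from $PE(M)$, and pure-essentiality of the envelope then forces $M=PE(M)$; for $(2)\Rightarrow(1)$ you use the cotorsion envelope $C(F)$ (flat and cotorsion by Fact \ref{xu1}, hence pure-injective by (2)) and the fact that $PE(F)$ splits off as a direct summand of any pure-injective pure extension of $F$. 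Both direct-summand facts about $PE$ that you flag are indeed standard (see e.g.\ \cite[\S 2]{prest}), and the flat/pure dictionary you invoke is the same one the paper uses in the proof of Fact \ref{ffact}. The one dependency worth making explicit is that your $(2)\Rightarrow(1)$ direction relies on the existence of cotorsion envelopes, i.e.\ on the Flat Cover Conjecture \cite{bee}; this is also how the argument goes in \cite{roth}, which postdates that theorem, so your route and the cited source's route coincide in substance.
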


Recall that $\phi$ is a positive primitive formula ($pp$-formula for short), if $\phi$ is an existentially quantified system of linear equations. For $M$ a module, $\bar{a} \in M^{<\omega}$ and $B \subseteq M$ we define the $pp$-type of $\bar{a}$ over $B$ in $M$, denoted by $pp(\bar{a}/ B, M)$, to be the set of $pp$-formulas $\phi(\bar{x}, \bar{b})$ such that $\bar{b} \in B$ and $M$ satisfies $\phi(\bar{a}, \bar{b})$. Recall the following result.

\begin{fact}[{\cite[3.6]{ziegler}}]\label{ziegler}
Let $M, N$ be pure-injective left $R$-modules, $A \subseteq M$ and $B \subseteq M$. If there is $f: A \to B$ a partial isomorphism\footnote{$f$ is a bijection between $A$ and $B$ and $f$ preserves $pp$-formulas.}, then there is $g: H^M(A)  \cong H^N(B)$ such that $g$ extends $f$.
\end{fact}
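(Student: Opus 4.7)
The plan is a back-and-forth construction extending the partial isomorphism $f \colon A \to B$ to an isomorphism $g \colon H^M(A) \to H^N(B)$, exploiting the pure-injectivity of $M$ and $N$ to realize pp-types at each step. Throughout, the key property used is that $f$ preserves pp-formulas in both directions, a property which is preserved by every extension we build.

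First I would extend $f$ to a partial isomorphism between the submodules $\langle A \rangle$ and $\langle B \rangle$ generated by $A$ and $B$, and then further to the pure closures of these submodules inside $M$ and $N$ respectively. Since $f$ preserves pp-formulas and module operations are pp-definable, the extension to the generated submodules is forced. To extend to the pure closure, given $m$ in the pure closure of $\langle A \rangle$ in $M$, transport the pp-type $\pp^M(m / \langle A \rangle)$ via $f$ to a pp-type over $\langle B \rangle$ in $N$, which is finitely satisfiable in $\langle B \rangle$ and hence realized in $N$ by pure-injectivity; the realization lies in the pure closure of $\langle B \rangle$ because its membership there is witnessed by the same pp-formulas with parameters.

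Next I would carry out the back-and-forth proper. Enumerating $H^M(A)$ and $H^N(B)$ outside the current domain and range, one extends one element at a time: at stage $\alpha$, with partial isomorphism $f_\alpha \colon D_\alpha \to R_\alpha$, to add a new $m \in H^M(A)$, form $p(x) = \pp^M(m / D_\alpha)$, transport via $f_\alpha$ to a pp-type $f_\alpha(p)$ over $R_\alpha$ in $N$, which is finitely satisfied (since $p$ is and $f_\alpha$ preserves pp) and therefore realized in $N$ by pure-injectivity. A symmetric back-step with $H^N(B)$ using pure-injectivity of $M$ secures surjectivity. Unioning the coherent chain of partial isomorphisms yields $g$.

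The main obstacle — and the heart of the lemma — is ensuring that the realizations at each step actually lie inside the pure-injective hulls $H^N(B)$ and $H^M(A)$, rather than just somewhere in the ambient pure-injective modules. This requires the structural characterization of pure-injective hulls as \emph{pure-essential} pure-injective pure extensions: elements of $H^M(A)$ over $\langle A \rangle$ are controlled by the lattice of pp-definable subgroups and the indecomposable pp-types (the ``points'' of the Ziegler spectrum). Because $f_\alpha$ preserves the pp-lattice, the transported pp-type $f_\alpha(p)$ inherits the same indecomposability and isolation data as $p$, so its realization in $N$ can be chosen within the analogously characterized $H^N(B)$. Once this is secured, the back-and-forth closes off and produces the desired isomorphism $g$ extending $f$.
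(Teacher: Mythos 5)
First, note that the paper offers no proof of this statement: it is quoted as a Fact directly from Ziegler's paper, so the only comparison available is with the standard argument there. Measured against that, your back-and-forth sketch contains two genuine gaps. The first is the exact-realization problem: in the forth step you realize the transported pp-type $f_\alpha(p)$ in $N$, but algebraic compactness of a pure-injective module only produces an element $n$ whose pp-type over $R_\alpha$ \emph{contains} $f_\alpha(p)$; it may satisfy strictly more pp-formulas, in which case $f_{\alpha+1}$ no longer reflects pp-formulas and the invariant ``partial isomorphism'' is destroyed. Nothing in the sketch explains how to realize the type exactly. The second is the containment problem you yourself flag: membership in $H^N(B)$ is not determined by a pp-type over the current domain (if $N = H^N(B)\oplus N'$, an element of $N'$ can have the same pp-type over $B$ as an element of the hull), so the claim that ``the realization can be chosen within $H^N(B)$ because the transported type inherits the same indecomposability and isolation data'' is an assertion rather than an argument; as written it essentially restates what has to be proved.

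The argument Ziegler actually gives avoids back-and-forth entirely and is worth internalizing, since it discharges both obstacles at once. Because $H^N(B)$ is pure-injective and $f$ preserves pp-formulas, $f$ extends in a single step to a homomorphism $g\colon H^M(A)\to H^N(B)$ (extension of pp-type-preserving partial maps into pure-injective modules). Because $H^M(A)$ is a pure-essential extension of $A$, any homomorphism whose restriction to $A$ is a partial isomorphism is automatically a pure embedding; its image is then a pure-injective, hence direct, summand of $H^N(B)$ containing $B$, and the minimality of the hull over $B$ forces $g$ to be onto. Pure-essentiality and minimality are exactly the structural facts that replace your transfinite construction, and without invoking them (or something equivalent) the back-and-forth cannot be closed off.
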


One of the missing pieces in the previous section is that we did not characterize Galois-types. The next lemma characterizes them under Hypothesis \ref{hyp1}. We obtain the same characterization as that of \cite[3.14]{kuma}, but with a conceptually different proof. The argument of  \cite[3.14]{kuma} can not be applied in this setting and vice versa.

\begin{lemma}\label{pp=gtp}
Let $M, N_1, N_2 \in \K^\eff$,  $M \leq_{pp} N_1, N_2$, $\bar{b}_{1}
\in  N_1^{<\omega}$  and $\bar{b}_{2} \in N_2^{<\omega}$. Then:
 \[ \gtp(\bar{b}_{1}/M; N_1) = \gtp(\bar{b}_{2}/M; N_2) \text{ if
and
only if } \pp(\bar{b}_{1}/M , N_1) = \pp(\bar{b}_{2}/M, N_2).\]
\end{lemma}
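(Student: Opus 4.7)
The forward direction is routine: since $\K^\eff$ has amalgamation, $E^{\K}_{\text{at}}$ is already an equivalence relation and equals $E^{\K}$; an amalgam $N \in \K^\eff$ with pure embeddings $f_\ell : N_\ell \to N$ fixing $M$ and satisfying $f_1(\bar{b}_1) = f_2(\bar{b}_2)$ transports $pp$-formulas in both directions (pure embeddings preserve and reflect $pp$-formulas), so the $pp$-types over $M$ coincide.

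For the backward direction, the plan is to pass to pure-injective envelopes, apply Ziegler's theorem to get an isomorphism of small pp-hulls identifying $\bar{b}_1$ with $\bar{b}_2$ over $M$, and then glue via amalgamation in $\K^\eff$. Writing $H(N_\ell)$ for the pure-injective envelope of $N_\ell$, Hypothesis \ref{hyp1} gives $H(N_\ell) \in \K^\eff$, and because $N_\ell \leq_{pp} H(N_\ell)$ we have $\pp(\bar{b}_\ell/M, H(N_\ell)) = \pp(\bar{b}_\ell/M, N_\ell)$. Combined with the hypothesis $\pp(\bar{b}_1/M, N_1) = \pp(\bar{b}_2/M, N_2)$, we get $\pp(\bar{b}_1/M, H(N_1)) = \pp(\bar{b}_2/M, H(N_2))$, so the map $f : M \cup \{\bar{b}_1\} \to M \cup \{\bar{b}_2\}$ acting as the identity on $M$ and sending $\bar{b}_1$ to $\bar{b}_2$ is a partial isomorphism between these subsets of the pure-injective modules $H(N_1)$ and $H(N_2)$.

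By Fact \ref{ziegler}, $f$ extends to an isomorphism $g : H_1 \to H_2$, where $H_\ell := H^{H(N_\ell)}(M \cup \{\bar{b}_\ell\})$. Each $H_\ell$ is a pure submodule of the flat module $H(N_\ell)$, hence flat, so $H_1, H_2 \in \K^\eff$. Composing $g$ with the inclusion $H_2 \hookrightarrow H(N_2)$ yields a pure embedding $\tilde g : H_1 \to H(N_2)$, while $H_1 \leq_{pp} H(N_1)$. Applying the amalgamation property of $\K^\eff$ (Fact \ref{ffact}) to the span $H(N_1) \hookleftarrow H_1 \xrightarrow{\tilde g} H(N_2)$ produces $N \in \K^\eff$ together with pure embeddings $h_1 : H(N_1) \to N$ and $h_2 : H(N_2) \to N$ such that $h_1 \rest_{H_1} = h_2 \circ \tilde g$. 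Since $g$ fixes $M$ and maps $\bar{b}_1 \mapsto \bar{b}_2$, this gives $h_1 \rest_M = h_2 \rest_M$ and $h_1(\bar{b}_1) = h_2(\bar{b}_2)$. Restricting $h_\ell$ to $N_\ell \leq_{pp} H(N_\ell)$ yields the amalgamation witnessing $\gtp(\bar{b}_1/M; N_1) = \gtp(\bar{b}_2/M; N_2)$.

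The key step I expect to be most delicate is ensuring that the Ziegler isomorphism at the level of the small pp-hulls $H_\ell$ can be absorbed into an amalgam of the full pure-injective envelopes. This is exactly the role of Hypothesis \ref{hyp1}: without it, the pure-injective envelopes $H(N_\ell)$ would not lie in $\K^\eff$, and the AEC amalgamation property of $\K^\eff$ could not be invoked to carry out the final gluing. Everything else reduces to standard preservation facts about $pp$-formulas under pure embeddings.
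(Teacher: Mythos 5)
Your proof is correct and follows essentially the same route as the paper: use Hypothesis \ref{hyp1} to keep pure-injective envelopes inside $\K^{\eff}$, apply Fact \ref{ziegler} to upgrade equality of $pp$-types to an isomorphism of hulls over $M$ carrying $\bar{b}_1$ to $\bar{b}_2$, and then glue by amalgamation. The only (cosmetic) difference is that the paper first amalgamates $N_1$ and $N_2$ over $M$ and works inside a single envelope $PE(N)$, whereas you take the two envelopes $H(N_1)$, $H(N_2)$ separately and apply Ziegler's theorem across them; both are valid instances of Fact \ref{ziegler}.
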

\begin{proof}
The forward direction is clear, so we only prove the backward direction.

 Assume  $\pp(\bar{b}_{1}/M , N_1) = \pp(\bar{b}_{2}/M, N_2)$, then by amalgamation there is $N \in \K^\eff$ and $f: N_1 \xrightarrow[M]{} N$ with $N \leq_{pp} N_2$. Since $\Kf$ is closed under pure-injective envelopes by Hypothesis \ref{hyp1}, $PE(N) \in \Kf$. Moreover, $N \leq_{pp} PE(N)$ so $\pp(f(\bar{b}_{1})/M , PE(N)) = \pp(\bar{b}_{2}/M, PE(N))$. Then by  Fact \ref{ziegler} there is \[g: H^{PE(N)}(M \cup \{f(\bar{b}_1) \}) \cong_{M} H^{PE(N)}(M \cup \{ \bar{b}_2 \}) \] with $g \circ f (\bar{b}_1)= \bar{b}_2$.

Since flat modules are closed under pure submodules, we have that $H^{PE(N)}(M \cup \{f(\bar{b}_1) \})$ and $H^{PE(N)}(M \cup \{ \bar{b}_2 \})$ are flat. Then applying amalgamation a couple of times we get the desired result.
\end{proof}

As  a corollary we obtain that $\theta_0=\aleph_0$, this improves the results of \cite[\S 6]{lrv} (Fact \ref{ffact}.(4)) for classes of flat modules with pure embeddings under Hypothesis \ref{hyp1}.

\begin{cor}
$\Kf$ is $(<\aleph_0)$-tame.
\end{cor}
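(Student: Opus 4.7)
The plan is to exploit Lemma \ref{pp=gtp}, which identifies Galois-types over models in $\Kf$ with $pp$-types, together with the observation that any single $pp$-formula uses only finitely many parameters.

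Fix $M \in \Kf$ and distinct $p, q \in \gS(M)$, say $p = \gtp(\bar{b}_1/M; N_1)$ and $q = \gtp(\bar{b}_2/M; N_2)$. By Lemma \ref{pp=gtp}, the $pp$-types $\pp(\bar{b}_1/M, N_1)$ and $\pp(\bar{b}_2/M, N_2)$ differ, so there exist a $pp$-formula $\phi(\bar{x}, \bar{y})$ and a finite tuple $\bar{a}$ from $|M|$ such that, without loss of generality, $N_1 \models \phi(\bar{b}_1, \bar{a})$ while $N_2 \not\models \phi(\bar{b}_2, \bar{a})$. Let $A$ be the (finite) set of coordinates of $\bar{a}$.

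It remains to verify $p \upharpoonright_A \neq q \upharpoonright_A$. I would argue this directly from Definition \ref{gtp-def}: if the restrictions were equal, then by definition of $E^{\Kf}$ there would be a finite chain of triples connecting $(\bar{b}_1, A, N_1)$ to $(\bar{b}_2, A, N_2)$ through $E_{\text{at}}^{\Kf}$. Each atomic step is witnessed by $\Kf$-embeddings into a common flat amalgam that fix $A$ pointwise; since such embeddings are pure, they both preserve and reflect satisfaction of $pp$-formulas whose parameters they fix. Hence the truth value of $\phi(\bar{x}, \bar{a})$ is transported along the chain, forcing $N_1 \models \phi(\bar{b}_1, \bar{a})$ to be equivalent to $N_2 \models \phi(\bar{b}_2, \bar{a})$, a contradiction. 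Thus $A$ witnesses $(<\aleph_0)$-tameness.

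There is no real obstacle; all of the work has already been absorbed into Lemma \ref{pp=gtp} (which is where Hypothesis \ref{hyp1} enters, via Ziegler's extension theorem for partial isomorphisms between pure-injective envelopes that happen to remain in $\Kf$). The only subtle point to check is that Definition \ref{gtp-def}(6) allows restriction of a Galois-type to an \emph{arbitrary} subset, not only to the universe of a submodel, so the finite set $A$ used above is a legitimate witness in the definition of tameness.
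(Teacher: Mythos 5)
Your proof is correct and is precisely the argument the paper intends: the corollary is stated without proof as an immediate consequence of Lemma \ref{pp=gtp}, the point being that distinct Galois-types over $M$ correspond to distinct $pp$-types, and a single $pp$-formula witnessing the difference involves only finitely many parameters from $M$. Your careful verification that purity of the $\K$-embeddings transports the truth value of $\phi(\bar{x},\bar{a})$ along the $E_{\text{at}}^{\Kf}$-chain is exactly the (routine) detail the paper leaves implicit.
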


As in \cite[3.16-3.19]{kuma} one can obtain the following results. 

 \begin{lemma}\label{g-st}\
\begin{enumerate}
\item If $\lambda^{|R| + \aleph_0}=\lambda$, then $\Kf$ is $\lambda$-stable.
\item  If $\lambda^{|R| + \aleph_0}=\lambda$ or $\forall \mu < \lambda( \mu^{|R| + \aleph_0} <
\lambda)$, then $\Kf_\lambda$ has a universal model
\end{enumerate}
\end{lemma}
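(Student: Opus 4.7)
The plan is to reduce both parts to a count of $pp$-types via Lemma \ref{pp=gtp} and then deploy the AEC machinery from Section 2.

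For part (1), fix $M \in \Kf_\lambda$. By Lemma \ref{pp=gtp}, a Galois-$1$-type over $M$ is determined by the corresponding $pp$-$1$-type $\pp(b/M, N)$, so it suffices to bound the number of such $pp$-types. The key observation is that for every $pp$-formula $\phi(x, \bar y)$ and each element $b$ in a pure extension $N \geq_{pp} M$, the trace $\{\bar c \in M^{|\bar y|} : N \models \phi(b, \bar c)\}$ is either empty or a coset of the $pp$-definable subgroup $\{\bar c \in M^{|\bar y|} : M \models \phi(0, \bar c)\}$, since $pp$-formulas are preserved under subtraction and pure embeddings reflect them. Thus for each $\phi$ the trace is specified by at most $\lambda + 1 = \lambda$ pieces of data (``empty'' or a single witness), and as there are $|R| + \aleph_0$ $pp$-formulas up to equivalence one gets $|\gS(M)| \leq \lambda^{|R|+\aleph_0} = \lambda$.

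For part (2), I split into two cases. If $\lambda^{|R|+\aleph_0} = \lambda$, then part (1) gives $\lambda$-stability; combined with joint embedding, amalgamation and no maximal models (Fact \ref{ffact}), Fact \ref{existence} produces a $(\lambda, \omega)$-limit model, which is universal in $\Kf_\lambda$ by Fact \ref{euni}. If instead $\forall \mu < \lambda (\mu^{|R|+\aleph_0} < \lambda)$, I would build an increasing continuous chain $\{M_i : i < \cof(\lambda)\}$ in $\Kf$ with $\|M_i\| < \lambda$, $\sup_i \|M_i\| = \lambda$, and $M_{i+1}$ universal over $M_i$. At each successor stage, pick a cardinal $\mu_i < \lambda$ with $\mu_i \geq \|M_i\|$ and $\mu_i^{|R|+\aleph_0} = \mu_i$; by part (1), $\Kf$ is $\mu_i$-stable, so Fact \ref{existence} yields $M_{i+1}$ of size $\mu_i$ universal over $M_i$. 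A back-and-forth argument — decomposing any $N \in \Kf_\lambda$ as a continuous union of pure submodels of size $< \lambda$ and embedding them stage-by-stage into the $M_i$'s — then shows $M := \bigcup_i M_i$ is universal in $\Kf_\lambda$.

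The main obstacle is the cardinal-arithmetic bookkeeping in the second case of (2). Specifically, one must verify that fixed points of $\mu \mapsto \mu^{|R|+\aleph_0}$ are cofinal below $\lambda$ under the hypothesis (this is where the assumption $\forall \mu < \lambda (\mu^{|R|+\aleph_0} < \lambda)$ is genuinely used, typically by iterating the operation transfinitely and observing that $\lambda > 2^{|R|+\aleph_0}$ leaves room to reach a limit of cofinality $> |R| + \aleph_0$), and one must ensure the chain $\{\|M_i\|\}_i$ can be arranged with supremum exactly $\lambda$. Once this is settled, the chain construction and back-and-forth are routine adaptations of \cite[3.16--3.19]{kuma}.
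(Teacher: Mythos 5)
Your proof is correct and follows essentially the same route as the paper: part (1) is the paper's injection of $\gS(M)$ into the set of $pp$-types via Lemma \ref{pp=gtp} together with the standard coset count of $pp$-types, and part (2) is the paper's case split (limit models are universal in the first case, a chain of universal extensions as in the induction step of $(1)\Rightarrow(2)$ of Theorem \ref{main3} in the second). The one cardinal-arithmetic point you flag is immediate: for any $\mu < \lambda$ the cardinal $\mu^{|R|+\aleph_0}$ is already a fixed point of $\nu \mapsto \nu^{|R|+\aleph_0}$ and lies below $\lambda$ by hypothesis, so no transfinite iteration is needed.
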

\begin{proof}[Proof sketch]\
\begin{enumerate}
\item Let $M \in \Kf_\lambda$ and $\{\gtp(a_i/ M; N) : i < \alpha \}$ be an enumeration without repetitions of $\gS(M)$. Then define $\Phi: gS(M) \to S^{Th(N)}_{pp}(M)$ by $\phi(\gtp(a_i/ M; N))=pp(a_i/M, N)$.  Using that $\lambda^{|R| + \aleph_0}=\lambda$ and $pp$-quantifier elimination the result follows.
\item If $\lambda^{|R| + \aleph_0}=\lambda$, then there are limit models of cardinality $\lambda$ and limit models are universal models. If $\forall \mu < \lambda( \mu^{|R| + \aleph_0} < \lambda)$, the argument is similar to the induction step of (1) implies (2) of Theorem \ref{main3}.
\end{enumerate} \end{proof}

\begin{remark} Recall that \cite[1.2]{sh820} asserts that there is a universal group of size $\lambda$ in the class of torsion-free abelian groups with pure embeddings if $\lambda^{\aleph_0}=\lambda$ or $\forall \mu < \lambda( \mu^{ \aleph_0} <
\lambda)$. Observe that the class of torsion-free abelian groups is the class of flat $\mathbb{Z}$-modules and it satisfies Hypothesis \ref{hyp1}. Therefore, the above lemma generalizes  \cite[1.2]{sh820} to classes of flat modules not axiomatizable in first-order logic.  \end{remark}

\begin{remark}
Observe that the above lemma bounds $\theta_1$ by $|R|+\aleph_0$.
\end{remark}

In this case we get that \emph{long} limit models are not only cotorsion modules, but they are pure-injective modules.

\begin{lemma}\label{bigpi}
Assume $\lambda\geq (|R| + \aleph_0)^+$. If $M$ is a $(\lambda,
\alpha)$-limit model in $\Kf$ and $\cof(\alpha)\geq (|R| + \aleph_0)^+$, then $M$ is
pure-injective.
\end{lemma}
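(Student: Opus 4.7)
The proof should be essentially immediate from the machinery already set up. The plan is to combine Theorem \ref{bigpi2} with Fact \ref{requiv}, exploiting Hypothesis \ref{hyp1}. Note that Theorem \ref{bigpi2} was proved in the previous section without any use of Hypothesis \ref{hyp1}, so it applies verbatim here.

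First, I would observe that since $M$ is a $(\lambda,\alpha)$-limit model in $\Kf$ with $\lambda \geq (|R|+\aleph_0)^+$ and $\cof(\alpha) \geq (|R|+\aleph_0)^+$, Theorem \ref{bigpi2} directly gives that $M$ is a cotorsion module. Next, $M \in \Kf$ means $M$ is a flat left $R$-module. So $M$ is both flat and cotorsion.

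Finally, I would invoke Fact \ref{requiv}: under Hypothesis \ref{hyp1}, every flat cotorsion left $R$-module is pure-injective. Applying this to $M$ gives the desired conclusion that $M$ is pure-injective.

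There is no real obstacle here; this is a one-line consequence of the previously established results. The only thing worth emphasizing is that the whole point of the section is that Hypothesis \ref{hyp1} upgrades ``cotorsion'' to ``pure-injective'' for flat modules, so the cotorsion-ness of long limit models obtained in Theorem \ref{bigpi2} is automatically strengthened to pure-injectivity in this setting, without needing any new argument about limit models themselves.
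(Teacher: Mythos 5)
Your proof is correct and is exactly the paper's argument: apply Theorem \ref{bigpi2} to get that $M$ is cotorsion, then use Hypothesis \ref{hyp1} via Fact \ref{requiv} to upgrade flat cotorsion to pure-injective. Nothing further is needed.
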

\begin{proof}
By Theorem \ref{bigpi2} $M$ is a cotorsion module. Then by Hypothesis \ref{hyp1} and Fact \ref{requiv} it follows that $M$ is pure-injective.  \end{proof}

It is not a coincidence that we had to use Hypothesis \ref{hyp1} to obtain the above result. The next result shows that both notions are equivalent.

\begin{theorem}\label{newroth} For a ring $R$ the following are equivalent.
\begin{enumerate}
\item Every $(\lambda, \alpha)$-limit model in $\Kf$ with $\lambda \geq  (|R| + \aleph_0)^+$ and $\cof(\alpha)\geq (|R| + \aleph_0)^+$ is  pure-injective.
\item Hypothesis \ref{hyp1}, i.e., the pure-injective envelope of every flat left $R$-module is flat.
\end{enumerate}
\end{theorem}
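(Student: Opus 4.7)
The backward direction (2) $\Rightarrow$ (1) is immediate from what has already been proved: Theorem \ref{bigpi2} says every $(\lambda,\alpha)$-limit model with $\cof(\alpha)\geq (|R|+\aleph_0)^+$ is a cotorsion module, and under Hypothesis \ref{hyp1} Fact \ref{requiv} tells us that a flat cotorsion module is automatically pure-injective. So this direction is essentially Lemma \ref{bigpi} and needs no new idea.

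For the forward direction (1) $\Rightarrow$ (2), the plan is to use Fact \ref{requiv} in the opposite direction: it suffices to show that every flat cotorsion left $R$-module is pure-injective. Let $M$ be such a module. I would choose a cardinal $\lambda \geq \max\{\|M\|,\, (|R|+\aleph_0)^+\}$ with $\lambda^{\theta_0} = \lambda$, so that by Fact \ref{ffact}.(3) $\Kf$ is $\lambda$-stable; then by Fact \ref{existence} a $(\lambda,(|R|+\aleph_0)^+)$-limit model $N$ exists in $\Kf$. By Fact \ref{euni} $N$ is universal in $\Kf_\lambda$, so since $\|M\|\leq\lambda$ there is a pure embedding $M \hookrightarrow N$.

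Now the key algebraic input is Remark \ref{easyprop}: since $M$ is flat, $N$ is flat, $M$ is cotorsion and $M\leq_{pp} N$, the quotient $N/M$ is flat and the short exact sequence $0\to M\to N\to N/M\to 0$ must split. Hence $M$ is a direct summand of $N$. By the assumption (1), $N$ is pure-injective, and being pure-injective is preserved under direct summands, so $M$ is pure-injective as required. Invoking Fact \ref{requiv} once more, Hypothesis \ref{hyp1} follows.

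The only point that requires any care is ensuring the stability cardinal $\lambda$ can be chosen large enough to accommodate $M$ while still having $\cof((|R|+\aleph_0)^+) = (|R|+\aleph_0)^+$ limit models available; this is routine because stability cardinals in $\Kf$ form an unbounded class by Fact \ref{ffact}.(3). There is no substantive obstacle beyond correctly chaining together Theorem \ref{bigpi2}, Fact \ref{euni}, Remark \ref{easyprop} and Fact \ref{requiv}.
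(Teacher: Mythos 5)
Your proof is correct, and its skeleton is the same as the paper's: both directions hinge on embedding an arbitrary flat module purely into a $(\lambda,(|R|+\aleph_0)^+)$-limit model $N$ that is pure-injective by hypothesis. The difference is in how you close the argument. The paper proves Hypothesis \ref{hyp1} directly: from $f\colon M\to N$ it deduces $PE(M)\cong PE(f[M])\leq_{pp} N$ (using that the pure-injective envelope of a pure submodule of a pure-injective module sits inside it purely) and then invokes closure of flat modules under pure submodules to get $PE(M)$ flat. You instead route through Fact \ref{requiv}, reducing to showing that every flat cotorsion module is pure-injective, and then use Remark \ref{easyprop} to split $M$ off as a direct summand of $N$, so that pure-injectivity passes to $M$. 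Your version trades the envelope-theoretic step for the cotorsion splitting plus Rothmaler's equivalence, which is arguably more self-contained given what is already proved in the paper; the paper's version is shorter and establishes the envelope statement without the detour. One small point present in both arguments: Fact \ref{euni} gives universality in $\Kf_\lambda$, so for $\|M\|<\lambda$ one should first enlarge $M$ to a pure extension of cardinality $\lambda$ (e.g., $M\oplus R^{(\lambda)}$) before embedding into $N$; this is routine and the paper elides it as well.
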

\begin{proof}
The backward direction is Lemma \ref{bigpi}, so we show the forward direction. Let $M \in \Kf$. Pick $\lambda \geq \|M \| + (|R| + \aleph_0)^+$ such that $\Kf$ is $\lambda$-stable, this is possible by Fact \ref{ffact}.(3). Then by Fact \ref{existence} there is $N$ a $(\lambda, (|R| + \aleph_0)^+)$-limit model. From the assumption we have that $N$ is pure-injective and since there is $f: M \to N$ a pure embedding, it follows that $PE(M) \cong PE(f[M]) \leq_{pp} N$. Since $\Kf$ is closed under pure submodules, we conclude that $PE(M) \in \Kf$.  \end{proof}

\begin{remark}
Since Hypothesis \ref{hyp1} is one of the equivalent assertions of the main theorem of \cite[2.3]{roth},  the above theorem gives a new characterization of the rings studied in \cite{roth}.
\end{remark}

To finish this section we  show that under Hypothesis \ref{hyp1}, one can lower the bound where the tail of uniqueness of limit cardinals begins to $|R|+\aleph_0$.

\begin{theorem}\label{main1} For a ring $R$ satisfying Hypothesis \ref{hyp1} the following are equivalent.
\begin{enumerate}
\item $R$ is left perfect.
\item The class of flat left $R$-modules with pure embeddings is superstable.
\item There is a $\lambda \geq (|R| + \aleph_0)^+$ such that the class of flat left $R$-modules with pure embeddings has uniqueness of limit models of cardinality $\lambda$.
\item Every limit model in the class of flat left $R$-modules with pure embeddings is $\Sigma$-pure-injective.
\item For every $\lambda \geq |R| + \aleph_0$,  the class of flat left $R$-modules with pure embeddings has uniqueness of limit models of cardinality $\lambda$.

\end{enumerate}
\end{theorem}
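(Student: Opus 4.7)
The plan is to reduce the equivalence of (1) through (4) to Theorem \ref{main3} and to prove the new implication (1) $\Rightarrow$ (5) via an explicit construction based on Bass's structure theorem.

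First I would observe that, under Hypothesis \ref{hyp1}, a flat $R$-module $M$ is $\Sigma$-cotorsion if and only if it is $\Sigma$-pure-injective: the nontrivial direction uses that $M^{(I)}$ is flat and invokes Fact \ref{requiv} to conclude that $M^{(I)}$ is pure-injective, while the converse is immediate since pure-injective modules are cotorsion. Since every limit model of $\Kf$ is flat, condition (4) of the present theorem coincides with condition (4) of Theorem \ref{main3}, so the equivalence of (1), (2), (3), and (4) follows directly from Theorem \ref{main3}. The implication (5) $\Rightarrow$ (3) is immediate by taking any $\lambda \geq (|R| + \aleph_0)^+$.

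For (1) $\Rightarrow$ (5), assume $R$ is left perfect and Hypothesis \ref{hyp1} holds. By Fact \ref{xu2} together with Fact \ref{requiv}, every flat left $R$-module is pure-injective. I would then invoke Bass's structure theorem: every flat (equivalently, projective) left $R$-module over a left perfect ring is a direct sum of indecomposable projectives $Re$ for primitive idempotents $e \in R$, and only finitely many isomorphism classes of such summands occur (since $R/J(R)$ is semisimple Artinian). Letting $U$ be the direct sum of one representative from each isomorphism class, $|U| \leq |R| + \aleph_0$. For every $\lambda \geq |R| + \aleph_0$ the module $U^{(\lambda)} \in \Kf_\lambda$ is universal in $\Kf_\lambda$ (every flat module of cardinality at most $\lambda$ decomposes as $\bigoplus_e Re^{(\mu_e)}$ with $\mu_e \leq \lambda$ and is thus a direct summand of $U^{(\lambda)}$), and is absorbing in the sense that $U^{(\lambda)} \oplus M \cong U^{(\lambda)}$ for every $M \in \Kf_{\leq \lambda}$, because $\lambda + \mu_e = \lambda$.

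With $U^{(\lambda)}$ in hand, for each limit ordinal $\alpha < \lambda^+$ one builds a strictly increasing, continuous chain $\{M_i : i < \alpha\}$ inside $\bigoplus_{j < \alpha} U^{(\lambda)}$ with each $M_i \cong U^{(\lambda)}$ and $M_{i+1} = M_i \oplus U^{(\lambda)}$; by Lemma \ref{uextension} (applicable since $M_i$ is cotorsion and $U^{(\lambda)}$ is universal in $\Kf_\lambda$), $M_{i+1}$ is universal over $M_i$, so $U^{(\lambda)}$ is a $(\lambda, \alpha)$-limit model. This yields existence of $\lambda$-limit models for every $\lambda \geq |R| + \aleph_0$, and uniqueness follows because any two $\lambda$-limit models are universal in $\Kf_\lambda$ by Fact \ref{euni} and pure-injective (being flat) by our hypotheses, so Fact \ref{ipi} applies. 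The main new ingredient relative to Theorem \ref{main3}, and the main obstacle to verify carefully, is the use of Bass's structure theorem together with the absorption property of $U^{(\lambda)}$; this is what produces a universal flat module already at cardinality $|R| + \aleph_0$ and allows the tail of uniqueness to start at $|R| + \aleph_0$ rather than at the larger stability cardinal supplied by Fact \ref{ffact}.
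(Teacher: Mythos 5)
Your reduction of the equivalence of (1)--(4) to Theorem \ref{main3} via Fact \ref{requiv} is exactly what the paper does, and (5) $\Rightarrow$ (3) is the same triviality. The genuine divergence is in (1) $\Rightarrow$ (5). The paper handles uniqueness just as you do (over a left perfect ring every limit model is cotorsion, so Corollary \ref{easy}, i.e.\ Fact \ref{ipi}, applies), but for existence it shows that $\Kf$ is $\lambda$-stable for every $\lambda \geq |R|+\aleph_0$ by model-theoretic means: under Hypothesis \ref{hyp1} Galois-types coincide with $pp$-types (Lemma \ref{pp=gtp}), the model $N$ realizing the types is $\Sigma$-pure-injective, hence $Th(N)$ is totally transcendental and the number of $pp$-types over $M$ is at most $\lambda$; existence of limit models then follows from Fact \ref{existence}. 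You instead build the limit model explicitly: by Bass's structure theory every flat (equivalently projective) module over a left perfect ring is a direct sum of the finitely many principal indecomposables $Re$, so $U^{(\lambda)}$ is universal and absorbing in $\Kf_\lambda$, and Lemma \ref{uextension} turns the chain $M_{i+1}=M_i\oplus U^{(\lambda)}$ into a witness that $U^{(\lambda)}$ is the $(\lambda,\alpha)$-limit model for every limit $\alpha<\lambda^+$. Both arguments are correct. Your route buys two things: it is essentially pure algebra (no totally transcendental theories), and, more significantly, it nowhere actually needs Hypothesis \ref{hyp1} --- Fact \ref{ipi} only requires cotorsion, so the appeal to pure-injectivity in your uniqueness step is a harmless detour. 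Consequently your argument proves (1) $\Leftrightarrow$ (5) for an arbitrary ring, lowering the starting point of the tail of uniqueness in Theorem \ref{main3} to $|R|+\aleph_0$ without the standing hypothesis of this section. What the paper's route buys instead is the stability spectrum computation and the identification of Galois-types with $pp$-types, which have independent interest and are reused elsewhere in Section 4.
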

\begin{proof} $(1) \Leftrightarrow (2) \Leftrightarrow (3) \Leftrightarrow (4)$ Follow from Theorem \ref{main3} and Fact \ref{requiv}.

$(5) \Rightarrow (2)$ Clear.

$(1) \Rightarrow (5)$ Since every limit model is a cotorsion module. Then by Corollary \ref{easy} there is at most one $\lambda$-limit model for each $\lambda$ up to isomorphisms. Hence to finish the proof, it is enough to show that for every $\lambda \geq |R| + \aleph_0$, $\Kf$ is $\lambda$-stable.

Let $\lambda \geq |R| + \aleph_0$ and $M \in \Kf_\lambda$. Let $\{\gtp(a_i/ M; N) : i < \alpha \}$ be an enumeration without repetitions of $\gS(M)$. We can assume that they are all realized in a fixed $N$ by amalgamation. Now, consider $\Phi: gS(M) \to S^{Th(N)}_{pp}(M)$ given by $\Phi(\gtp(a_i/ M; N))=pp(a_i/M, N)$. By  Lemma \ref{pp=gtp} it follows that $\Phi$ is a well-defined injective function. Since $N$ is $\Sigma$-pure-injective by (1) and Hypothesis \ref{hyp1}, $Th(N)$ is totally transcendental (see for example \cite[3.2]{prest}). In particular, since complete theories of modules have $pp$-quantifier elimination we can conclude that $|S^{Th(N)}_{pp}(M)|=|S^{Th(N)}(M)| \leq \lambda$. Therefore, $|\gS(M)| \leq \lambda$.

 \end{proof}

\begin{remark} Recall that $\mathbb{Z}$ is not a perfect ring. Then by condition five of the above theorem we have that  the class of torsion-free abelian groups with pure embeddings does not have uniqueness of limit models in any uncountable cardinal. This was shown in \cite[4.26]{maz} using AEC methods and in \cite[4.15]{kuma} using group theoretic methods.
\end{remark}


\end{document}